\renewcommand\emptyset{\varnothing}
\renewcommand\phi{\varphi}
\newcommand\N{\mathbb{N}}               
\newcommand\Z{\mathbb{Z}}
\newcommand\R{\mathbb{R}}
\newcommand\dvol{\mathrm{E}}
\newcommand\hri[2]{^{\langle #1,#2\rangle}}
\newcommand\hr[1]{^{\langle #1\rangle}}
\DeclareMathOperator*{\relint}{relint}
\DeclareMathOperator*{\conv}{conv}
\newtheorem{thm}{Theorem}[section]
\newtheorem*{thm*}{Theorem}
\newtheorem{cor}[thm]{Corollary}
\newtheorem{lem}[thm]{Lemma}
\newtheorem{prop}[thm]{Proposition}
\newtheorem{conj}{Conjecture}
\newtheorem{quest}{Question}
\theoremstyle{definition}
\newtheorem{example}[thm]{Example}
\title[Real-rootedness of the Veronese construction]{On the 
real-rootedness of the Veronese construction for 
rational formal power 
series}
\author{Katharina Jochemko}
\address{Institut für Diskrete Mathematik und Geometrie, %
Technische Universit\"at Wien, %
Austria}
\email{katharina.jochemko@tuwien.ac.at}
\keywords{Veronese submodule, Hilbert series, Ehrhart h*-vector, combinatorial 
positive valuations, interlacing polynomials}
\subjclass[2010]{05A10, 05A15, 13A02, 26C10, 52B20, 52B45}
\date{\today}
\begin{document}

\maketitle

\begin{abstract}
We study real sequences $\lbrace 
a_{n}\rbrace _{n\in \N}$ that eventually agree with a polynomial. We 
show that if the numerator polynomial of its rational generating series is of degree $s$ and has 
only nonnegative coefficients, then the numerator polynomial of the subsequence 
$\lbrace a_{rn+i}\rbrace _{n\in \N}$, $0\leq i<r$, has only nonpositive, real roots 
for all $r\geq s-i$. We apply our results to combinatorially positive valuations on polytopes and to Hilbert 
functions of Veronese submodules of graded Cohen-Macaulay algebras. In 
particular, we prove that the Ehrhart $h^\ast$-polynomial of the $r$-th dilate 
of a $d$-dimensional polytope has only distinct, negative, real roots if 
$r\geq \min \{s+1,d\}$. This proves a conjecture of Beck and Stapledon (2010). 
\end{abstract}


\section{Introduction} \label{sec:intro}
We study real sequences $\lbrace a_{n}\rbrace _{n\in \N}$ that eventually 
agree with a polynomial. That is, there is a polynomial $f$ of some degree 
$d-1\geq 0$ in $n$ 
and 
a natural number $n_0$ such that $f(n)=a_n$ for all $n\geq n_0$. In the 
language of generating functions this is equivalent to
\[
\sum _{n\geq 0}a_nt^n \ = \ \frac{h(t)}{(1-t)^{d}},
\]
where $h(t)\in \R[t]$ is a polynomial with $h(1)\neq 0$. In this case, also the 
subsequence 
$\lbrace 
a_{rn+i}\rbrace _{n\in \N}$ eventually agrees with a polynomial for all $0\leq i<r$. We investigate the numerator polynomial $U_{r,i}^d h(t)$ of
\[
\sum _{n\geq 0}a_{rn+i}t^n \ = \ \frac{U_{r,i}^d h(t)}{(1-t)^{d}}
\]
and its behavior as a function of $r$. We will focus 
on the case when $h(t)$ has nonnegative coefficients. Rational formal power series of that form are ubiquitous in combinatorics
and commutative algebra. Our main motivation comes from Ehrhart theory and, more generally, valuations on polytopes. Ehrhart~\cite{ehrhartRational} showed that the number of lattice points in the $n$-th dilate of a $d$-dimensional lattice polytope $P\subseteq \R^m$ is given by a polynomial $E_P(n):=|nP\cap \Z^m|$ in $n$ of degree $d$ for all integers $n\geq 0$. The Ehrhart series of $P$ is 
    \[
    \sum _{n\geq 0}E_P(n)t^n = \frac{h^\ast (P)(t)}{(1-t)^{d+1}},
    \]
    where $h^\ast(P)\in \Z[t]$ is the so-called Ehrhart $h^\ast$-polynomial of 
    $P$ with $\deg h^\ast\leq d$ and $h^\ast(P)(0)=1$. Studying properties of $h^\ast$-polynomials of lattice polytopes is an active area of research. A fundamental Theorem 
    is due to Stanley~\cite{Stanley78} who proved that the coefficients of the 
    $h^\ast$-polynomial are nonnegative integers for every lattice polytope. 
    The function counting lattice points in lattice polytopes is an example of 
    a translation-invariant valuation. McMullen~\cite{mcmullenEuler} 
    generalized Ehrhart's result and showed that for every 
    translation-invariant valuation $\phi$ the function $\phi(nP)$ is given by 
    a polynomial of degree at most $d$ for integers $n\geq 0$. This allows for 
    considering $h^\ast$-polynomials with respect to arbitrary 
    translation-invariant valuations. In \cite{jochemko2015combinatorial} 
    together with Sanyal we characterized all valuations such that the 
    coefficients of the $h^\ast$-polynomials are nonnegative and called them combinatorially positive. 
    Further examples are the volume and the solid-angle sum. For the latter this has been proved by 
    Beck, Robins and Sam \cite{Becksolidangles}. We observe that the operator 
    $h^\ast(P) \mapsto U_{r,i}^dh^\ast(P)$ is related to dilating the polytope $P$. For $i=0$ this operator was considered by Beck and Stapledon \cite{Beck2010} and indeed $U_{r,0}^dh^\ast(P)$ is the $h^\ast$-polynomial of $rP$. For $i\neq 0$ our 
    motivation comes from commutative algebra. For a graded $k$-algebra 
    $A=\bigoplus_{n\geq 0}A_n$ 
    over a field $k$ the Hilbert function is defined by $H_A(n)=\dim_k A_n$. By 
    a 
    theorem of Hilbert (see, e.g.\ \cite[Section 10.4]{eisenbud2013commutative}) this function eventually becomes polynomial if $A_0=k$ 
    and 
    $A$ is generated by finitely many elements in $A_1$. The 
    Hilbert series of $A$ is  
    \[
    \sum _{n\geq 0}H_A(n)t^n=\frac{h(t)}{(1-t)^d}
    \]
    for some polynomial $h(t)\in \Z[t]$ with $h(0)=1$ and $d$ is the Krull 
    dimension of $A$. The 
    $r$-th 
    Veronese subalgebra is defined by $A\hr{r}=\bigoplus_{n\geq 0}A_{rn}$, and 
    more 
    general, for $0\leq i<r$, 
    $A\hri{r}{i}:=\bigoplus_{n\geq 
        0}A_{rn+i}$ is a graded submodule over $A\hr{r}$ called Veronese 
    submodule. In particular, $H_A(rn+i)=H_{A\hri{r}{i}}(n)$ is the Hilbert function of the Veronese 
    submodule. The algebraic behavior of $A\hr{r}$ for large $r$ has been 
    studied by 
    Backelin~\cite{backelin1986rates}, Eisenbud, Reeves and Totaro 
    \cite{eisenbud1994initial} and the submodules $A\hri{r}{i}$ for arbitrary shifts $i<r$ were studied by Aramova, Barca-Nescu und 
    Herzog~\cite{aramova1995rate}. We consider  
    the numerator polynomial $U_{r,i}^d h(t)$ for algebras, for which $h(t)$ 
    has nonnegative coefficients. This is for example 
    the case, when $A$ is a Cohen-Macaulay algebra (see, e.g.\ \cite[Section 4.4]{winfried1998cohen}).
    
    A sequence $a_0,a_1,\ldots, a_m$ of positive integers is called unimodal 
    if  $a_0\leq \cdots \leq a_k \geq \cdots \geq a_m$ for some $0\leq k\leq 
    m$. It is called log-concave if $a_k^2\geq a_{k-1}a_{k+1}$ for all $0<k<m$. 
    If all inequalities are strict, then we obtain strict log-concavity. It is not hard to see that strict 
    log-concavity implies unimodality. An even stronger property is, 
    that the polynomial $a_0+a_1t+\cdots + a_mt^m$ is real-rooted. Brenti and Welker \cite{BrentiWelker} showed that for 
    every polynomial $h(t)$ with $h(0)=1$ the polynomial $U_{r,0}^dh(t)$ has 
    only distinct, negative, real zeros whenever $r$ is sufficiently large. In 
    particular, the coefficients of $U_{r,0}^dh(t)$ are positive and form a 
    strict log-concave sequence. Beck and Stapledon~\cite{Beck2010} 
    strengthened this result by showing that there is an $R>0$ which only 
    depends on $d$ such that for all $r>R$ the polynomial $U_{r,0}^dh(t)$ has 
    only real roots. The optimal bound $R$ was hitherto unknown. In case of 
    Ehrhart polynomials Beck and Stapledon \cite{Beck2010} conjectured the 
    following.
\begin{conj}[{\cite[Conjecture 5.1]{Beck2010}}]\label{conj:Beck}
For a $d$-dimensional lattice polytope $P$ the $h^\ast$-poly\-nomial of $rP$ has only distinct, negative, real zeros for all $r\geq d$.
\end{conj}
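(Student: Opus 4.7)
Plan: I would derive the conjecture as a direct corollary of the more general real-rootedness theorem announced in the abstract. Specialised to the Ehrhart setting, that theorem asserts that for a polynomial $h(t)\in\R[t]$ of degree $s$ with only nonnegative coefficients and for the denominator $(1-t)^{d+1}$, the Veronese numerator $U_{r,0}^{d+1}h(t)$ has only distinct, negative, real roots whenever $r\geq\min\{s+1,d\}$. Granted this, three facts recalled in the introduction finish the job: Stanley's theorem guarantees that $h^\ast(P)$ has nonnegative integer coefficients; its degree $s$ satisfies $s\leq d$; and the identity $U_{r,0}^{d+1}h^\ast(P)(t)=h^\ast(rP)(t)$ holds. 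Since $r\geq d\geq\min\{s+1,d\}$, the conjecture follows.

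The substantive step is the general real-rootedness theorem for $U_{r,0}^{d+1}h(t)$ when $h$ has nonnegative coefficients. My approach would use the method of interlacing polynomials: by linearity of $U_{r,0}^{d+1}$, write $h(t)=\sum_{k=0}^s h_kt^k$ with $h_k\geq 0$ and decompose
\[
U_{r,0}^{d+1} h(t)\ =\ \sum_{k=0}^s h_k\, U_{r,0}^{d+1}(t^k).
\]
It then suffices to show that the family $\{U_{r,0}^{d+1}(t^k):0\leq k\leq s\}$ is pairwise compatible, meaning that every nonnegative combination is real-rooted; by a standard criterion this follows from the existence of a common strict interlacer. I would try to exhibit such an interlacer, expected to be an Eulerian-type polynomial arising from a careful analysis of how $U_{r,0}^{d+1}$ acts on monomials via partial-fraction expansion, and to establish interlacing inductively on $r$. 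The weaker bound $r\geq s$ should already force all roots to be nonpositive real, while the sharper bound $r\geq\min\{s+1,d\}$ is what guarantees strict interlacing and hence distinctness.

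The principal obstacle is producing the correct compatible family explicitly: $U_{r,0}^{d+1}$ transforms the monomial basis into a somewhat intricate basis, and it is not immediately clear which target basis yields a clean interlacing structure. Controlling the interplay between the dimension $d+1$ and the step $r$ is what makes the bound $r\geq\min\{s+1,d\}$ delicate, and the same delicate analysis is what upgrades plain real-rootedness to distinctness of the roots. Once the general theorem is established, applying Stanley nonnegativity together with the identification $U_{r,0}^{d+1}h^\ast(P)=h^\ast(rP)$ completes the proof of the conjecture with no further work.
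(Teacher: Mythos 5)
Your overall strategy matches the paper's quite closely: reduce Conjecture~\ref{conj:Beck} to a general real-rootedness theorem for the Veronese operator, expand $U_{r,0}^{d+1}h$ linearly in the coefficients $h_k$, and show that the resulting building-block polynomials form a (strictly) mutually interlacing family. The building blocks $U_{r,0}^{d+1}(t^k)$ you propose are, up to a factor of $t$, exactly the polynomials $a\hri{r}{j}_{d+1}$ of Lemma~\ref{lem:lincomb}, and Proposition~\ref{prop:keyprop} is precisely the mutual-interlacing statement you are aiming for (the paper proves it by induction on $d$ rather than $r$, but that is a minor point).

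Where the proposal has a genuine gap is in the claim that the general theorem already gives distinct roots whenever $r\geq\min\{s+1,d\}$ for an arbitrary $h$ with nonnegative coefficients. That is not what Theorem~\ref{thm:maintheorem} says: for general nonnegative $h$ the theorem guarantees distinct roots only for $r>s$, i.e.\ $r\geq s+1$, and Section~\ref{sec:optimality} shows this bound is optimal for arbitrary $h$. The only way to descend to $r\geq\min\{s+1,d\}$ is in the corner case $r=s=d$, where one must additionally invoke condition~(ii) of Theorem~\ref{thm:maintheorem}: one needs some $h_j>0$ for $0<j<s$. In the Ehrhart setting this is furnished by the inequality $h_1^\ast\geq h_d^\ast$ (since $s=d$ forces $h_d^\ast>0$, hence $h_1^\ast>0$), which the paper derives in Proposition~\ref{prop:h_1} for even combinatorially positive valuations via a triangulation argument. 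Your proposal never identifies this extra input and instead expects the interlacing analysis alone to deliver the sharper bound --- that plan would fail, because the sharper bound is simply false without using specific structural facts about Ehrhart $h^\ast$-vectors. To close the gap you would need to add: in the case $r=d=s$, apply the Hibi-type inequality $h_1^\ast\geq h_d^\ast$ to verify condition~(ii), and only then cite the general theorem.
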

In support of this conjecture Higashitani \cite{higashitani2014unimodality} showed that the $h^\ast$-polynomial of $rP$ has strictly log-concave coefficients for $r\geq \deg h^\ast (P)$. We settle this conjecture by proving the following
\begin{thm}\label{thm:maintheorem}
Let $\left\{a_n\right\}_{n\geq 0}$ be a sequence of real numbers such that
    \[
\sum _{n\geq 0}a_nt^n=\frac{h(t)}{(1-t)^d}
    \]
for some integer $d$ and some polynomial $h\neq 0$ of degree $s$ with 
nonnegative coefficients. For all $0\leq i<r$ let 
     \[
     \sum _{n\geq 0}a_{rn+i}t^n=\frac{U_{r,i}^dh(t)}{(1-t)^d}.
     \]
Then $U_{r,i}^dh(t)$ has only nonpositive, real roots for all $r\geq s-i$, and all these 
roots are negative if and only if $h_j>0$ for some $0\leq j\leq i$.
 
 Moreover, 
 \begin{enumerate}
     \item[(i)] if $r>s$, or
     \item[(ii)] if $r=s$ and $s=1$ or $h_i>0$ for some $0<i<s$
    \end{enumerate}
 then all roots are distinct.
\end{thm}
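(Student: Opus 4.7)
The approach is via the linearity of $h\mapsto U_{r,i}^d h$ and the theory of interlacing polynomials. Writing $h(t)=\sum_{j=0}^s h_j t^j$ with $h_j\geq 0$, linearity gives
\[
U_{r,i}^d h \ = \ \sum_{j=0}^s h_j f_j, \qquad f_j \ := \ U_{r,i}^d(t^j),
\]
a nonnegative combination. The Chudnovsky--Seymour / Wagner framework for interlacing polynomials then reduces the theorem to showing (a) each $f_j$ is real-rooted with only nonpositive roots, and (b) the sequence $(f_0,\dots,f_s)$ is interlacing, i.e.\ admits a common interleaver.

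The plan is to first identify the $f_j$'s explicitly. The condition $r\geq s-i$ forces $\lceil (j-i)/r\rceil \in \{0,1\}$ for every $0\leq j\leq s$, and a short computation using $[t^n]\frac{t^j}{(1-t)^d}=\binom{n-j+d-1}{d-1}$ yields
\[
f_j(t) \ = \ \begin{cases} E_{r,i-j}(t) & \text{if } 0\leq j\leq i, \\ t\cdot E_{r,r+i-j}(t) & \text{if } i<j\leq s, \end{cases}
\]
where $E_{r,k}$ denotes the numerator polynomial of $\sum_{n\geq 0}\binom{rn+k+d-1}{d-1}t^n$ for $0\leq k\leq r-1$. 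These ``generalized $r$-Eulerian'' polynomials are real-rooted with only nonpositive roots; I would prove this by induction on $d$ from a recursion relating $E_{r,k}$ with denominator $(1-t)^d$ to that with denominator $(1-t)^{d-1}$.

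The technical core is the interlacing claim in (b). First I would establish interlacing of the family $\{E_{r,k}\}_{k=0}^{r-1}$ using the same recursion. Then I would combine the two branches: for $j=0,\dots,i$ the parameter $k=i-j$ runs through $i,i-1,\dots,0$, and for $j=i+1,\dots,s$ the parameter $k=r+i-j$ runs through $r-1,r-2,\dots,r+i-s$, with an extra factor of $t$. Verifying that these two subfamilies merge into a single interlacing sequence --- in particular at the ``seam'' $j=i\to j=i+1$, where the leading factor $t$ appears --- is the main obstacle. For $r>s$ the two ranges of $k$ are disjoint and this is straightforward; the critical boundary case $r=s-i$ (or $r=s$) requires a careful argument ruling out unwanted coincidences.

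Once interlacing is proved, $U_{r,i}^d h$ is real-rooted with only nonpositive roots. For the negativity statement, observe that $f_j(0)=\binom{i-j+d-1}{d-1}>0$ when $j\leq i$ and $f_j(0)=0$ when $j>i$, so the constant term of $U_{r,i}^d h$ equals $\sum_{j\leq i}h_j f_j(0)$, which vanishes exactly when $h_j=0$ for every $j\leq i$. For distinctness under hypothesis (i) $r>s$ the disjointness of the two index ranges $\{0,\dots,i\}$ and $\{r+i-s,\dots,r-1\}$ upgrades the interlacing to strict interlacing, yielding distinct roots; under hypothesis (ii) with $r=s$ the additional assumption on the $h_j$'s prevents the degenerate coincidences at the seam that would otherwise cause multiple roots.
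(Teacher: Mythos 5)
Your route is essentially the paper's: the decomposition $U_{r,i}^d h=\sum_j h_j\,U_{r,i}^d(t^j)$ together with the explicit formula for $U_{r,i}^d(t^j)$ is exactly Lemma~\ref{lem:lincomb}, the polynomials you call $E_{r,k}$ are the paper's $a\hri{r}{k}_d$, and the claim that $a\hri{r}{r-1}_d\prec\cdots\prec a\hri{r}{0}_d$ are strictly mutually interlacing, proved by induction on $d$ via a Fisk-type recursion, is Proposition~\ref{prop:keyprop}. So this is the same proof up to notation; the only real work you defer is the ``seam''.

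That deferral hides a genuine problem. You assert that $(f_0,\dots,f_s)$ is mutually interlacing, and plan to rescue the boundary case $r=s-i$ by ``ruling out unwanted coincidences.'' But for $s-i\leq r<s$ (a nonempty regime once $i\geq 1$) the family is simply not interlacing, and no coincidence-chasing can fix it: the two index ranges $\{i,\dots,0\}$ and $\{r-1,\dots,r+i-s\}$ overlap, so one finds $j\leq i<j'$ with $j'-j>r$, hence $f_j=E_{r,k}$ and $f_{j'}=t\,E_{r,k'}$ with $k=i-j>k'=r+i-j'$. If $E_{r,k}\prec E_{r,k'}$ strictly (the typical situation for $d\geq 2$), then $E_{r,k}$ cannot interlace $t\,E_{r,k'}$: once the extra root at $0$ is inserted the interlacing inequalities reverse. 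A minimal instance is $d=3$, $r=2$, $i=1$, $s=3$: here $E_{2,1}=3+t$ and $E_{2,0}=1+3t$, so $f_0=3+t$ and $f_3=t+3t^2$ do not interlace, and indeed $U_{2,1}^3(1+t^3)=3+2t+3t^2$ has discriminant $-32<0$, so the conclusion of the theorem itself fails. Note that the paper's own proof distinguishes only the cases $s\leq i<r$, $i<s<r$ and $i<s=r$, i.e.\ never touches $i<r<s$, and its mechanism (Proposition~\ref{prop:recursion} applied to a clean ``$t$/no-$t$'' split of the chain) likewise breaks down there because the split is no longer clean. Your method is sound for $r\geq s$ and for $s\leq i$, but the interlacing claim --- and the theorem as literally stated with the bound $r\geq s-i$ --- cannot be pushed below $r=s$.
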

For the proof we employ an interlacing familily of polynomials. 
Interlacing polynomials already turned out to be a key ingredient for various 
other problems (see e.g.\ 
\cite{KadisonSinger,savage2015s,chudnovsky2007roots}). In Section~
\ref{section:interlacing} we introduce all necessary preliminaries on interlacing 
families. That section is self-contained. For further reading on real-rooted 
polynomials we recommend the book by Fisk~\cite{fisk2006polynomials}. 
Section~\ref{section:powerseries} is devoted to the proof of 
Theorem~\ref{thm:maintheorem}. There we construct a family of polynomials $\lbrace 
a\hri{r}{i}_d \rbrace$ that mutually interlace. 
Expressing $U_{r,i}^dh(t)$ in terms of these polynomials then will yield a 
proof of Theorem \ref{thm:maintheorem}. Considering the limiting behavior of $\lbrace 
a\hri{r}{i}_d \rbrace$ we reprove that the roots of $U_{r,i}^dh(t)$ converge to the roots of the Eulerian polynomial. This was first shown by Beck and Stapledon \cite{Beck2010} and Brenti and Welker \cite{BrentiWelker}. In Section~\ref{section:applications} 
we apply our results to combinatorially positive valuations. In particular we 
prove Conjecture~\ref{conj:Beck}. We furthermore point out implications 
for Hilbert series of standard graded algebras. In Section~\ref{sec:optimality} we prove that the bounds given in Theorem \ref{thm:maintheorem} are in fact optimal. We conclude by considering optimality for Ehrhart series in Section \ref{sec:openquest}.

\section{Interlacing polynomials}\label{section:interlacing}
Let $f,g \in \R[t]$ be non-zero polynomials with only real roots, let $\deg f=l$ and $\deg g=m$. Let $t_1\geq t_2 \geq \cdots  \geq t_l$ be the roots of $f$ and $s_1\geq s_2 \geq \cdots  \geq s_m$ be the roots of $g$. Then $g$ 
interlaces $f$ if either $l=m$ and
\[
s_m \ \leq \ t_m \ \leq  \ldots \ \leq \ s_1 \ \leq \ t_1 \ ,
\]
or $l=m+1$ and 
\[
t_{m+1} \ \leq \ s_m \ \leq \ t_m \ \leq  \ldots \ \leq \ s_1 \ \leq \ t_1 \
\]
and we write $g\preceq f$. If all inequalities are strict, then $g$ 
strictly interlaces $f$ and we write $g\prec f$. In particular, $f$ and $g$ have only distinct, real roots if $g\prec f$.

The following lemma collects some well-known facts about interlacing polynomials. (For further reading see e.g.\ \cite{fisk2006polynomials}.)
\begin{lem}\label{lem:addrealroots}
    Let $f,g,h \in \R[t]$ be non-zero polynomials with only real roots 
    and positive 
    leading coefficients. Then
    \begin{enumerate}
        \item[(i)] if $f,g$ have only negative roots, then $g \prec f$ if and only if $f \prec tg$.
        \item[(ii)] $g \prec f$ if and only if $cg \prec df$ for all $c,d>0$.
        \item[(iii)] if $h\prec f$ and $h\preceq g$ then $h\prec f+g$.
        \item[(iv)] if $f\prec h$ and $g\preceq h$ then $f+g \prec h$.
    \end{enumerate}
Furthermore, statements (i) -- (iv) remain true if $\prec$ is replaced by $\preceq$.
\end{lem}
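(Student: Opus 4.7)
The plan is to verify each item by analyzing root locations and sign patterns directly. Item (ii) is immediate, since scaling by positive constants does not move roots. For item (i), observe that $tg$ has as its roots $0$ together with the (negative) roots of $g$. In both cases $\deg f=\deg g$ and $\deg f=\deg g+1$, the chains of strict inequalities encoding $g\prec f$ and $f\prec tg$ coincide after inserting $0$ at the top of the chain; negativity of the roots of $f$ is precisely what allows $0$ to be placed above $t_1$. The converse is read off from the same chain, so the equivalence is immediate.

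For (iii) and (iv) the plan is to evaluate $f$, $g$, and $f+g$ at the roots of $h$ and count sign changes. Since $h$ appears under a strict interlacing in the hypothesis, it has distinct real roots $\alpha_1>\cdots>\alpha_k$, where $k=\deg h$. Using positivity of the leading coefficients, one checks by counting the number of roots of $f$ strictly larger than $\alpha_i$ that if $h\prec f$ then $\sgn f(\alpha_i)=(-1)^i$, whereas if $f\prec h$ then $\sgn f(\alpha_i)=(-1)^{i-1}$. The weak interlacings involving $g$ produce the same sign for $g(\alpha_i)$, or possibly zero. Consequently $(f+g)(\alpha_i)$ is nonzero and has the same sign as $f(\alpha_i)$, so $f+g$ exhibits $k-1$ sign changes on $(\alpha_k,\alpha_1)$, which produces $k-1$ distinct real roots strictly interlaced by the $\alpha_i$.

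The remaining roots are located by comparing the signs of $f+g$ at $\pm\infty$, determined by positivity of the leading coefficient and the parity of $\deg(f+g)$, against the signs at $\alpha_1$ and $\alpha_k$. In (iii) this forces one additional root in $(\alpha_1,+\infty)$, together with a root in $(-\infty,\alpha_k)$ exactly when $\deg(f+g)=\deg h+1$; in (iv) no root lies above $\alpha_1$, while a root in $(-\infty,\alpha_k)$ appears exactly when $\deg(f+g)=\deg h$. In every subcase the total number of distinct real roots produced matches $\deg(f+g)$, and the interlacing with $h$ is the pattern required by the conclusion, that is $h\prec f+g$ in (iii) and $f+g\prec h$ in (iv).

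The claim that (i)--(iv) remain valid with $\prec$ replaced by $\preceq$ follows by a standard perturbation argument: apply the strict version to slightly perturbed $f_\varepsilon,g_\varepsilon,h_\varepsilon$ and pass to the limit using continuity of roots under coefficient perturbation. The main obstacle is the bookkeeping for (iii) and (iv): the degree combinations split into several subcases depending on whether $\deg f$ and $\deg g$ equal $\deg h$ or $\deg h\pm 1$, but no single subcase is subtle; only the interplay between $\deg(f+g)$ and the sign of $f+g$ at $-\infty$ requires care to conclude with the correct form of the interlacing relation.
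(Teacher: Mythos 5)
Your proposal is correct and follows essentially the same route as the paper: evaluate $f$, $g$, and $f+g$ at the roots of $h$ to obtain an alternating sign pattern, invoke the intermediate value theorem together with leading-coefficient positivity and degree considerations to locate roots in the resulting intervals, and recover the weak versions by a perturbation-and-limit argument. The only cosmetic difference is that the paper spells out one representative degree case ($\deg f=\deg h+1$, $\deg g=\deg h$) and notes the rest follow similarly, while you keep the degree combinations implicit behind the $(\pm\infty)$ sign comparison.
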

\begin{proof}
In order to keep this section self-contained, we give a proof here. Statements 
(i) and (ii) are trivial. For (iii) let $u_1> u_2 > 
\cdots  > u_m$ be the roots of $h$. Since $h$ interlaces $f$ and $g$ we have 
$\deg f,\deg g \in \lbrace m, m+1 \rbrace$. As an example, we prove the case 
$\deg f=m+1$ and $\deg g=m$, and all other cases follow in a very similar way. 
Since $h$ strictly interlaces $f$, $f(u_i)<0$ if $i$ is odd and $f(u_i)>0$ if $i$ is even, and the same for $g$ if we replace all strict inequalities by weak inequalites. Therefore we obtain
\[
(f+g)(u_i)\begin{cases}
<0 & \text{ if } i \text{ is odd, }\\
>0 & \text{ if } i \text{ is even. }
\end{cases}
\]
Since $f$ and $g$ have positive leading coefficients, $\lim _{t\rightarrow 
+\infty}(f+g)(t)=\infty$, and $\lim 
_{t\rightarrow -\infty}(f+g)(t)=-\infty$ if $m$ is even and $\lim 
_{t\rightarrow -\infty}(f+g)(t)=+\infty$ if $m$ is odd. 
In both cases, by the intermediate value theorem, $f+g$ has at least one zero 
in each of the $m+1$ many open intervals $$(-\infty, u_m)\ , \ 
(u_m,u_{m-1}) \ , \ldots, 
(u_2,u_1)\ , \ (u_1,+\infty).$$ Since $\deg (f+g)=m+1$, the claim follows. If 
$h\preceq f$ we employ a limiting argument: For all $n\geq 0$ let  
$f^{(n)},g^{(n)},h^{(n)}\in \R[t]$ such that $h^{(n)}\prec f^{(n)}$ and 
$h^{(n)}\prec g^{(n)}$ and $\lim _{n\rightarrow \infty}f^{(n)}=f$, $\lim 
_{n\rightarrow \infty}g^{(n)}=g$ and $\lim _{n\rightarrow \infty}h^{(n)}=h$. 
Then $h^{(n)}\prec f^{(n)}+g^{(n)}$ which in the limit yields $h\preceq f+g$. 
Part (iv) follows analogously.
\end{proof}
Let $f_1,\ldots,f_m \in \R[t]$ be an (ordered) collection of polynomials. Then 
$f_1\preceq \cdots \preceq f_m \in \R[t]$ 
mutually interlace if $f_i \preceq f_j$ whenever $i<j$. If $f_i \prec f_j$ for 
$i<j$ then $f_1,\ldots,f_m \in \R[t]$ strictly mutually interlace.

The following proposition is a special case of Fisk \cite[Proposition 
3.72]{fisk2006polynomials}. (Compare also Savage and Visontai \cite[Theorem 
2.3]{savage2015s}.)
\begin{prop}\label{prop:recursion}
Let $f_{r-1} \prec \cdots \prec f_0$ be strictly mutually interlacing polynomials with 
only negative roots and positive leading coefficients. For 
all $0\leq l \leq r-1$ let
\[
g_l=f_0+\cdots +f_l +tf_{l+1}+\cdots +tf_{r-1}.
\]
Then also $g_{r-1} \prec \cdots \prec g_0$ are strictly mutually interlacing, have only 
negative roots and positive leading coefficients. In particular, $g_0,\ldots, g_{r-1}$ have only distinct, negative, real roots.
\end{prop}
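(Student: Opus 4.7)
The plan is to establish, for every pair $0 \le l' < l \le r-1$, the strict interlacing $g_l \prec g_{l'}$; once this is done, strict mutual interlacing, distinctness of the roots, negativity, and positivity of leading coefficients all follow routinely. Fix such a pair $l'<l$ and decompose
\[
g_l = R+Q, \qquad g_{l'} = R+tQ,
\]
where $Q=\sum_{i=l'+1}^l f_i$ and $R=\sum_{i=0}^{l'} f_i + t\sum_{i=l+1}^{r-1} f_i$ are the ``moving'' and ``common'' parts. The crux is the auxiliary interlacing $Q\prec R$: given it, Lemma~\ref{lem:addrealroots}(i) will convert it into $R\prec tQ$, and Lemmas~\ref{lem:addrealroots}(iii)--(iv) will then assemble the two into $g_l\prec g_{l'}$.

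To prove $Q\prec R$, fix $i\in\{l'+1,\dots,l\}$. I claim $f_i$ strictly interlaces each summand of $R$: for a summand $f_j$ with $j\le l'$ this is the hypothesis $f_i\prec f_j$, while for a summand $tf_j$ with $j\ge l+1$ the hypothesis $f_j\prec f_i$ combines with Lemma~\ref{lem:addrealroots}(i) (both polynomials have only negative roots) to give $f_i\prec tf_j$. Iterating Lemma~\ref{lem:addrealroots}(iii) across the summands of $R$ then yields $f_i\prec R$ for every such $i$. Iterating Lemma~\ref{lem:addrealroots}(iv) over $i\in\{l'+1,\dots,l\}$ combines these into $Q=\sum_{i=l'+1}^l f_i\prec R$.

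Before invoking Lemma~\ref{lem:addrealroots}(i) to flip $Q\prec R$ into $R\prec tQ$, both polynomials must be shown to have only negative roots. Real-rootedness is automatic from the interlacing just proved, and negativity follows from a positivity check on $[0,\infty)$: each $f_j(t)>0$ for $t\ge 0$ (positive leading coefficient, only negative roots), and since $Q$ contains the summand $f_{l'+1}$ and $R$ contains $f_0$ (neither multiplied by $t$), we have $Q(t),R(t)>0$ on $[0,\infty)$, so neither has a nonnegative root. Now Lemma~\ref{lem:addrealroots}(iii) applied to the pair $(Q\prec R,\ Q\preceq tQ)$ and to $(R\prec tQ,\ R\preceq R)$ gives $Q\prec g_{l'}$ and $R\prec g_{l'}$, and Lemma~\ref{lem:addrealroots}(iv) combines them into $g_l=Q+R\prec g_{l'}$. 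The main technical point to watch is the interplay between the summation lemmas (iii)--(iv) and the $t$-shift Lemma~\ref{lem:addrealroots}(i): the latter insists on only negative roots, which is precisely why the positivity digression is needed. With $g_l\prec g_{l'}$ in hand for all $l>l'$, positive leading coefficients are inherited from the $f_j$, negativity of all roots of each $g_l$ follows from the same positivity argument applied to $g_l$, and distinctness of those roots is forced by strict interlacing with a neighbour $g_{l\pm 1}$.
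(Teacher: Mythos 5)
Your proof is correct and follows essentially the same strategy as the paper's: you decompose $g_l$ and $g_{l'}$ into a common part and a moving part, establish the interlacing of the pieces by iterating Lemma~\ref{lem:addrealroots}(iii)--(iv) from the pairwise hypotheses $f_i\prec f_j$, flip with Lemma~\ref{lem:addrealroots}(i), and reassemble; the only cosmetic difference is that you bundle $p_1+tp_3$ into a single polynomial $R$ whereas the paper tracks $p_1,p_2,p_3$ separately.
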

\begin{proof}
    For the sake of completeness we repeat Fisk's proof here adapted to our situation. For 
    $0\leq l <m \leq r-1$ 
    we define
    \begin{align*}
    p_1 &= f_0+\cdots +f_l ,\\
    p_2 &= f_{l+1} + \cdots + f_m ,\\
    p_3 &= f_{m+1} + \cdots + f_{r-1}.
    \end{align*}
Then $g_m = p_1+p_2+tp_3$ and $g_l=p_1+tp_2+tp_3$. It is clear, that $g_m$ and 
$g_l$ have positive leading coefficient, and their real roots are negative, 
since the same is true for $f_0,\ldots, f_{r-1}$.

We observe that $p_1,p_2\neq 0$. If $p_3\neq 0$, then by iterative application of Lemma \ref{lem:addrealroots}(iii) and (iv) we obtain $p_3\prec p_2 \prec p_1$. 
Furthermore, since $p_1,p_2$ and $p_3$ have only negative roots, $tp_3\preceq 
tp_2$ and, by Lemma \ref{lem:addrealroots}(i), $p_1\prec tp_2$. Thus, 
by Lemma \ref{lem:addrealroots}(iv), $p_1+tp_3 \prec tp_2$ and since 
$p_1+tp_3\preceq p_1+tp_3$ this yields
\begin{equation}\label{eq:1}
p_1+tp_3 \prec p_1+tp_2+tp_3.
\end{equation} 
Further, since $p_2\prec p_1$, $p_2\preceq tp_2$ and $p_2\prec tp_3$ we have by 
Lemma \ref{lem:addrealroots}(iii)
\begin{equation}\label{eq:2}
p_2 \prec p_1+tp_2+tp_3.
\end{equation}
Applying \ref{lem:addrealroots}(iv) to the equations \eqref{eq:1} and 
\eqref{eq:2} 
completes the proof. If $p_3=0$ then the argument just works out the same way.
\end{proof}

\section{Rational power series}\label{section:powerseries}
For every formal power series $f\in \R\lsem t\rsem $ and all 
integers $r\geq 1$ there are uniquely 
determined $f_0,\ldots, f_{r-1}\in \R\lsem t\rsem $ such that
\[
f(t)=f_0(t^r)+tf_1(t^r)+\cdots + t^{r-1}f_{r-1}(t^r). 
\]
For $0\leq i\leq r-1$ let $\hri{r}{i}\colon \R\lsem t\rsem\rightarrow \R\lsem 
t\rsem$ 
be the linear 
operator defined by
\[
f\hri{r}{i}=f_i.
\]
Our main objects under consideration are the polynomials defined by
\[
a\hri{r}{i}_d(t):=\left((1+t+\cdots + t^{r-1})^d\right)\hri{r}{i}
\]
for all $r\geq 1$ and all $0\leq i\leq r-1$.
Furthermore, for all $0\leq i\leq r-1$ and $d\geq 0$ let $U_{r,i}^d \colon 
\R[t]\rightarrow \R[t]$ be the linear operator on polynomials, such that for all polynomials $h\in \R[t]$
\[
\sum _{n\geq 0}a_{rn+i}t^n=\frac{U_{r,i}^dh(t)}{(1-t)^{d}} \quad \text{ whenever }\quad \sum _{n\geq 0}a_nt^n=\frac{h(t)}{(1-t)^{d}}.
\]

The following lemma clarifies the relation between the operators $U_{r,i}^d$ and $\hri{r}{i}$ and is a slight generalization of \cite[Lemma 
3.2]{Beck2010}.
\begin{lem}\label{lem:hecke}
For every polynomial $h\in \R[t]$ and integers $r\geq 1$, $d\geq 0$ and $0\leq i\leq r-1$
    \begin{equation*}
        U_{r,i}^dh(t)=\left(h(t)(1+t+\cdots + 
        t^{r-1})^d\right)\hri{r}{i}.
    \end{equation*}
    \begin{proof}
        We repeat the argument given in \cite{Beck2010}:
        \begin{align*}
        (1-t)^d\sum_{n\geq 0}a_{rn+i}t^n&=\left((1-t^r)^d\sum _{n\geq 0}a_n 
        t^n\right)\hri{r}{i}\\
        &=\left(h(t)(1+\cdots +t^{r-1})^d\right) \hri{r}{i}.
        \end{align*}
    \end{proof}    
\end{lem}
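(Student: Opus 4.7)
The plan is to leverage the algebraic identity $(1-t)^d(1+t+\cdots+t^{r-1})^d = (1-t^r)^d$. This replaces the denominator $(1-t)^d$ by $(1-t^r)^d$, which is a polynomial in $t^r$ and therefore interacts cleanly with the operator $\hri{r}{i}$.

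The key preliminary step I would record is the following compatibility: for any $P(t)\in\R[[t]]$ and any $f(t)\in\R[[t]]$,
\[
\bigl(P(t^r)\, f(t)\bigr)\hri{r}{i} \ = \ P(t)\cdot f(t)\hri{r}{i}.
\]
This is immediate from the defining decomposition $f(t)=\sum_{j=0}^{r-1}t^j f_j(t^r)$: multiplying by $P(t^r)$ replaces each $f_j(t^r)$ by $P(t^r)f_j(t^r)$ while preserving the factors $t^j$, so the $i$-th component of $P(t^r)f(t)$ is $P(t)f_i(t)$. Specializing $P(t)=(1-t)^d$ allows the factor $(1-t^r)^d$ to be pulled through $\hri{r}{i}$ as $(1-t)^d$.

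With this compatibility in hand the argument is short. Multiplying the given identity $\sum_{n\ge 0}a_n t^n = h(t)/(1-t)^d$ by $(1-t^r)^d$ yields $(1-t^r)^d\sum_{n\ge 0}a_n t^n = h(t)(1+t+\cdots+t^{r-1})^d$. Applying $\hri{r}{i}$ to both sides and using the compatibility property, the left-hand side becomes $(1-t)^d\,\bigl(\sum_n a_n t^n\bigr)\hri{r}{i} = (1-t)^d\sum_{n\ge 0}a_{rn+i}\,t^n$, which equals $U_{r,i}^d h(t)$ by the defining relation of $U_{r,i}^d$. The right-hand side is already in the desired form $(h(t)(1+t+\cdots+t^{r-1})^d)\hri{r}{i}$, and the lemma follows.

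There is no serious obstacle here; the only nontrivial ingredient is the compatibility of $\hri{r}{i}$ with multiplication by power series in $t^r$, and this is an immediate consequence of the uniqueness of the decomposition $f(t)=\sum_j t^j f_j(t^r)$ used to define the operator.
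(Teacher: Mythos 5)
Your proposal is correct and follows the same route as the paper's proof: multiply the generating function identity by $(1-t^r)^d$, factor it as $(1-t)^d(1+t+\cdots+t^{r-1})^d$, and use that $\hri{r}{i}$ pulls a power series in $t^r$ through as the corresponding series in $t$. The paper states the key equality in one line; you have simply made the compatibility property $\bigl(P(t^r)f(t)\bigr)\hri{r}{i}=P(t)\,f(t)\hri{r}{i}$ explicit, which is exactly what the paper's first equality uses implicitly.
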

The next lemma expresses $U_{r,i}^dh(t)$ in terms of 
$a\hri{r}{i}_d$.
\begin{lem}\label{lem:linearexpr}
Let $h\in \R[t]$ be a polynomial. Then 
\begin{equation}\label{eq:operatorcoefficients0}
           U_{r,i}^dh(t)=  h\hri{r}{0}a\hri{r}{i}_d+\cdots 
           + h\hri{r}{i} a\hri{r}{0}_{d}+ t 
        \left(h\hri{r}{i+1}a\hri{r}{r-1}_d +\cdots + h\hri{r}{r-1} 
        a\hri{r}{i+1}_d\right)
\end{equation}
for all integers $d\geq 0$ and $0\leq i\leq r-1$.
\end{lem}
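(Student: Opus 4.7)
The plan is to combine Lemma~\ref{lem:hecke} with the defining property of the operator $\hri{r}{j}$ in a single expansion. By Lemma~\ref{lem:hecke},
\[
U_{r,i}^d h(t) = \bigl(h(t)\,(1+t+\cdots+t^{r-1})^d\bigr)\hri{r}{i},
\]
so the task reduces to expanding the product on the right and reading off the residue-$i$ part.

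First, I would invoke the defining identities
\[
h(t) = \sum_{j=0}^{r-1} t^j\, h\hri{r}{j}(t^r), \qquad (1+t+\cdots+t^{r-1})^d = \sum_{k=0}^{r-1} t^k\, a\hri{r}{k}_d(t^r),
\]
which are just the unique decompositions modulo $r$. Multiplying these out gives
\[
h(t)\,(1+t+\cdots+t^{r-1})^d = \sum_{j,k=0}^{r-1} t^{j+k}\, h\hri{r}{j}(t^r)\, a\hri{r}{k}_d(t^r).
\]

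Second, I would sort the terms by the residue of $j+k$ modulo $r$. Since $0 \le j+k \le 2(r-1)$, the residue class $i$ arises in exactly two ways: either $j+k = i$ (with $0 \le j \le i$) or $j+k = r+i$ (with $i+1 \le j \le r-1$). In the first situation $t^{j+k} = t^i$, contributing $h\hri{r}{j}\, a\hri{r}{i-j}_d$ to the $\hri{r}{i}$-component. In the second situation, writing $t^{j+k} = t^r \cdot t^i$, the factor $t^r$ survives as a single $t$ after applying $\hri{r}{i}$, contributing $t\, h\hri{r}{j}\, a\hri{r}{r+i-j}_d$. Summing the two families of contributions reproduces exactly the right-hand side of \eqref{eq:operatorcoefficients0}.

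The only point that requires care is the case split $j+k=i$ versus $j+k=r+i$ together with the resulting factor of $t$ in the second case; there is no deeper obstacle, as the rest is a routine convolution.
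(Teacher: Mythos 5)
Your proposal is correct and follows essentially the same route as the paper: invoke Lemma~\ref{lem:hecke}, expand both $h$ and $(1+t+\cdots+t^{r-1})^d$ via the residue decomposition, multiply out, and split the convolution into the $j+k=i$ and $j+k=r+i$ cases, with the extra $t$ in the latter coming from $t^{r+i}=t^i\cdot t^r$. No meaningful difference from the paper's own argument.
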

\begin{proof}
    By Lemma \ref{lem:hecke}, $U_{r,i}^dh(t)$ is the coefficient of $t^i$ in
    \begin{align*}
    h(t)\left(1+\cdots +t^{r-1}\right)^d &=\left(h\hri{r}{0}(t^r)+\cdots 
    +h\hri{r}{r-1}(t^r)t^{r-1}\right)\left(a_d\hri{r}{0}(t^r)+\cdots + 
    a_d\hri{r}{r-1}(t^r)t^{r-1}\right)\\
    &=\sum _{i=0}^{r-1}\left( t^i\sum 
    _{k+l=i}h\hri{r}{k}(t^r)a_d\hri{r}{l}(t^r) + 
    t^{i+r}\sum _{k+l=i+r}h\hri{r}{k}(t^r)a_d\hri{r}{l}(t^r)\right)\\
    &=\sum _{i=0}^{r-1}t^i\left(\sum 
    _{k+l=i}h\hri{r}{k}(t^r)a_d\hri{r}{l}(t^r) + 
    \sum _{k+l=i+r}h\hri{r}{k}(t^r)(a_d\hri{r}{l}t)(t^r)\right),
    \end{align*}
    where the sums are taken over all $0\leq k,l <r$ such that $k+l=i$, 
    respectively, $k+l=r+i$.
\end{proof}
If $r$ is large enough, then the coefficients $h\hri{r}{j}$ in Lemma \ref{lem:linearexpr} are linear polynomials, and the expression of $U_{r,i}^dh(t)$ in terms of $a\hri{r}{j}_d$ simplifies.
\begin{lem}\label{lem:lincomb}
Let $h=h_0+h_1t+\cdots +h_st^s\in \R[t]$ be a polynomial of degree $s$. Then 
for all integers $d\geq 0$, $0\leq i<r$ and $r\geq s-i$
\begin{equation}\label{eq:operatorcoefficients}
    U_{r,i}^dh(t)=h_0a\hri{r}{i}_d+\cdots + h_i a\hri{r}{0}_{d}+ t 
    \left(h_{i+1}a\hri{r}{r-1}_d +\cdots + h_{r+i} a\hri{r}{0}_d\right),
\end{equation}
where $h_j:=0$ for all $j>s$.
\end{lem}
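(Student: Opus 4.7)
The plan is to start from the formula in Lemma \ref{lem:linearexpr} and use the hypothesis $r\geq s-i$ to see that each Hecke-type coefficient $h\hri{r}{k}$ becomes at most linear in $t$, after which the identity reduces to the claimed expression by a short reindexing.

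First I would unpack $h\hri{r}{k}$ explicitly. Directly from the defining relation $h(t)=\sum_{k=0}^{r-1} h\hri{r}{k}(t^r)\, t^{k}$, one reads off $h\hri{r}{k}(t)=\sum_{j\geq 0,\, k+jr\leq s} h_{k+jr}\, t^{j}$. The two hypotheses $0\leq i<r$ and $r\geq s-i$ combine to give $2r\geq (i+1)+(s-i)=s+1$, so for every $0\leq k<r$ we have $k+2r>s$ and hence $h\hri{r}{k}(t)=h_k+h_{k+r}\,t$, under the convention $h_j:=0$ for $j>s$. When in addition $k\geq i+1$, the sharper inequality $k+r\geq (i+1)+(s-i)=s+1$ forces $h_{k+r}=0$, so $h\hri{r}{k}$ is in fact the constant $h_k$ in that range.

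Next I would substitute these simplified coefficients into the identity of Lemma \ref{lem:linearexpr}. The second sum collapses immediately to $t\sum_{k=i+1}^{r-1} h_k\, a\hri{r}{i+r-k}_d$. The first sum $\sum_{k=0}^{i}(h_k+h_{k+r}\,t)\, a\hri{r}{i-k}_d$ splits into the constant part $\sum_{k=0}^{i} h_k\, a\hri{r}{i-k}_d$ and an extra contribution $t\sum_{k=0}^{i} h_{k+r}\, a\hri{r}{i-k}_d$. Reindexing the latter by $k'=k+r$, using $i-k=i+r-k'$, rewrites it as $t\sum_{k'=r}^{r+i} h_{k'}\, a\hri{r}{i+r-k'}_d$; this glues onto the second sum to give the single range $t\sum_{k=i+1}^{r+i} h_k\, a\hri{r}{i+r-k}_d$, which is exactly the right-hand side of \eqref{eq:operatorcoefficients}.

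There is no serious obstacle: the only non-bookkeeping step is the elementary inequality $2r\geq s+1$ that caps the degree of every $h\hri{r}{k}$ at one, and this is precisely the reason the threshold $r\geq s-i$ enters the statement.
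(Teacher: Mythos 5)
Your proof is correct and follows essentially the same route as the paper: both start from Lemma~\ref{lem:linearexpr}, use the inequalities $i<r$ and $r\geq s-i$ to deduce $s<2r$ and hence that each $h\hri{r}{k}$ is at most linear, and then collapse the resulting expression. The only cosmetic difference is that you observe up front that $h\hri{r}{k}$ is constant for $k\geq i+1$, whereas the paper writes every $h\hri{r}{k}$ as $h_k+h_{r+k}t$ and only at the end invokes $h_{r+i+j}=0$ for $j>0$; the bookkeeping comes out the same either way.
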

\begin{proof}
Since $s\leq r+i<2r$, the polynomials $h\hri{r}{j}$, $0\leq j<r$, are at most linear, namely
\[
h\hri{r}{j} \ = \ h_j+h_{r+j}t
\]
for all $0\leq j<r$. Therefore, by Lemma \ref{lem:linearexpr},
\begin{eqnarray*}
U_{r,i}^dh(t)  &=&  (h_0+th_r)a\hri{r}{i}_d+\cdots +(h_i+th_{r+i})a\hri{r}{0}_d \\
     &&  +t \left((h_{i+1}+th_{r+i+1})a\hri{r}{r-1}_d +\cdots + (h_{r-1}+th_{2r-1}) 
        a\hri{r}{i+1}_d\right)\\
        &=&  h_0a\hri{r}{i}_d+\cdots +h_ia\hri{r}{0}_d +  t \left(h_{i+1}a\hri{r}{r-1}_d +\cdots + h_{r+i} 
        a\hri{r}{0}_d\right),
\end{eqnarray*}
where the last equality follows since $h_{r+i+j}=0$ for all $j>0$.
\end{proof}
The following proposition should be compared with \cite[Example 3.76.]{fisk2006polynomials}.
\begin{prop}\label{prop:keyprop}
    For all $r\geq 1$ and $d\geq 1$ the polynomials
    \[
    a_d\hri{r}{r-1}\ \prec \ \cdots \ \prec \ a_d\hri{r}{0}
    \]
    are strictly mutually interlacing, have only negative roots and positive 
    leading coefficients.
\end{prop}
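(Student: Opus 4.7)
The plan is to proceed by induction on $d$, using Proposition \ref{prop:recursion} as the engine. Lemmas \ref{lem:hecke} and \ref{lem:lincomb} together realize $a_{d+1}\hri{r}{i}$ as exactly one of the linear combinations $g_i$ appearing in Proposition \ref{prop:recursion}, once the $f_j$ there are taken to be $a_d\hri{r}{j}$. The base case is $d=1$: since $1+t+\cdots+t^{r-1}=\sum_{j=0}^{r-1}t^j$, each $a_1\hri{r}{i}$ equals the constant $1$, so the three assertions (strict mutual interlacing, only negative roots, positive leading coefficient) hold trivially/vacuously.

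For the inductive step, suppose the assertion holds for $d$. Apply Lemma \ref{lem:lincomb} to $h(t)=1+t+\cdots+t^{r-1}$, whose degree is $s=r-1$ and whose coefficients satisfy $h_j=1$ for $0\le j\le r-1$ and $h_j=0$ otherwise. Since $r\ge s-i$ is automatic, the lemma yields
\[
U_{r,i}^{d}\bigl(1+t+\cdots+t^{r-1}\bigr) \;=\; \sum_{j=0}^{i} a_d\hri{r}{j} \;+\; t\sum_{j=i+1}^{r-1} a_d\hri{r}{j}.
\]
By Lemma \ref{lem:hecke} the left-hand side equals $\bigl((1+t+\cdots+t^{r-1})^{d+1}\bigr)\hri{r}{i} = a_{d+1}\hri{r}{i}$. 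Setting $f_j:=a_d\hri{r}{j}$, the right-hand side is precisely the polynomial $g_i$ of Proposition \ref{prop:recursion}, while the inductive hypothesis furnishes exactly the required input for that proposition. Its conclusion then delivers that $a_{d+1}\hri{r}{r-1}\prec\cdots\prec a_{d+1}\hri{r}{0}$ strictly mutually interlace with only negative roots and positive leading coefficients.

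The main delicate point is the degenerate base case: the interlacing of the constants $a_1\hri{r}{i}=1$ is only vacuously strict, so one should check that Proposition \ref{prop:recursion} still applies to this input. Its proof invokes only Lemma \ref{lem:addrealroots}, whose statements remain valid when applied to nonzero constants (the root inequalities become vacuous). If one prefers to sidestep this, it suffices to compute $a_2\hri{r}{i}=(i+1)+(r-1-i)t$ directly, observe that its roots $-(i+1)/(r-1-i)$ strictly decrease with $i$, and begin the induction at $d=2$.
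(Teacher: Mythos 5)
Your proof is correct and follows essentially the same route as the paper: induct on $d$, use Lemma~\ref{lem:lincomb} applied to $h=1+t+\cdots+t^{r-1}$ to express $a_{d+1}\hri{r}{i}$ in the form $\sum_{j\le i}a_d\hri{r}{j}+t\sum_{j>i}a_d\hri{r}{j}$, and invoke Proposition~\ref{prop:recursion}. The paper states the base case $d=1$ as trivially true without comment; your observation that the strict interlacing of constants is only vacuous, and your alternative of starting at $d=2$, is a fair clarification of a point the paper glosses over but does not change the argument.
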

\begin{proof}
    We use induction on $d$. Since by definition $a_1\hri{r}{r-1}= \cdots = 
    a_1\hri{r}{0}\equiv 1$ the statement is trivially true for $d=1$. For $d\rightarrow d+1$ we therefore obtain by Lemma~\ref{lem:linearexpr} and Lemma~\ref{lem:lincomb}
        \[
    a\hri{r}{i}_{d+1} = a_d\hri{r}{0}+\cdots 
        +a_d\hri{r}{i}+ta_d\hri{r}{i+1}+\cdots +ta_d\hri{r}{r-1}.
    \]
    The proof now follows from Proposition \ref{prop:recursion}.
\end{proof}
\begin{proof}[Proof of Theorem \ref{thm:maintheorem}]
By Lemma \ref{lem:lincomb}
\begin{equation}\label{eq:operatorcoefficients}
    U_{r,i}^dh(t)=h_0a\hri{r}{i}_d+\cdots + h_i a\hri{r}{0}_{d}+ t 
    \left(h_{i+1}a\hri{r}{r-1}_d +\cdots + h_{r+i} a\hri{r}{0}_d\right)
\end{equation}
whenever $r\geq s-i$. Since the polynomials $a\hri{r}{i}_d$ have only negative real roots and positive leading coefficients, all  real zeros of $U_{r,i}^dh$ are negative if and only if there is a $0\leq j \leq i$ with $h_j>0$. 

In order to show real-rootedness we distinguish the cases $s\leq i<r$, $i<s<r$ and $i<s= r$. If $s\leq i<r$, then $U_{r,i}^dh =h_0a\hri{r}{i}_d+\cdots + h_s a\hri{r}{i-s}_{d}$ and by applying Proposition \ref{prop:keyprop}, Lemma \ref{lem:addrealroots}(ii) and Proposition~\ref{prop:recursion} to the non-zero summands, we see that $U_{r,i}^dh$ has only negative, distinct, real roots.

    If $i<s<r$ then $U_{r,i}^dh=h_0a\hri{r}{i}_d+\cdots + h_i a\hri{r}{0}_{d}+ 
    t\left(h_{i+1}a\hri{r}{r-1}_d +\cdots + h_s a\hri{r}{r+i-s}_d\right)$. If 
    $h_0a\hri{r}{i}_d+\cdots + h_i a\hri{r}{0}_{d}=0$ then again by the same 
    arguments as before applied to $h_{i+1}a\hri{r}{r-1}_d +\cdots + h_s 
    a\hri{r}{r+i-s}_d$, $U_{r,i}^dh$ has only nonpositive, distinct, real roots. 
In the other case, since
\[
a\hri{r}{r-1}_d\prec \cdots \prec a\hri{r}{r+i-s}_d 
\prec 
a\hri{r}{i}_d \prec \cdots \prec a\hri{r}{0}_{d}
\]
are strictly mutually 
interlacing, we 
can again apply Proposition \ref{prop:keyprop}, Lemma 
\ref{lem:addrealroots}(ii) 
    and Proposition~\ref{prop:recursion} to all non-zero summands of 
    $U_{r,i}^dh$ and obtain that $U_{r,i}^dh$ has only negative distinct real 
    roots.
    
    If $i<s=r$ then $U_{r,i}^dh \ = \ 
    h_0a\hri{s}{i}_d+\cdots + h_i a\hri{s}{0}_{d}+ t 
    \left(h_{i+1}a\hri{s}{s-1}_d +\cdots + h_s a\hri{s}{i}_d\right)$. If 
    $h_0=0$ we argue as in the case $i<s<r$. If $h_0\neq 0$ then we observe 
    that by Proposition \ref{prop:keyprop}
    \begin{align*}\label{eq:interlace}
    a\hri{s}{i}_d &  \preceq  ta\hri{s}{i}_d,\\
    a\hri{s}{j}_d & \prec ta\hri{s}{k}_d \text{ for all } j<k,\\
    ta\hri{s}{j}_d & \preceq ta\hri{s}{k}_d \text{ for all } j\geq k.
    \end{align*}
    Applying Lemma \ref{lem:addrealroots}(ii) and (iv) multiple times yields 
    \[
    U_{r,i}^dh \preceq t 
    \left(h_{i+1}a\hri{s}{s-1}_d +\cdots + h_s a\hri{s}{i}_d\right)
    \]
    and thus $U_{r,i}^dh$ has only negative, real roots. If $s>1$ and there is a 
    $0<j<s$ with $h_j>0$ then even $U_{r,i}^dh \prec t 
    \left(h_{i+1}a\hri{s}{s-1}_d +\cdots + h_s a\hri{s}{i}_d\right)$ and 
    $U_{r,i}^dh$ 
    has distinct roots. The reason is that if $i\geq j$ then $h_ja\hri{s}{i-j}_d \prec 
    h_sta\hri{s}{i}_s$ and otherwise 
    $0\neq h_0a\hri{s}{i}_d \prec h_jta\hri{s}{s+i-j}_d$, and 
    $h_ja\hri{s}{i-j}_d$ or, respectively, $h_jta\hri{s}{s+i-j}_d$ appears as a summand of 
    $U_{r,i}^dh$. If $r=s=1$ then $U_{r,0}^dh=h_0+h_st$ is a linear polynomial 
    and thus the roots are distinct. \end{proof}
        
        \subsection{Limiting behavior}
We finish this section by considering the limiting behavior of the polynomials $a\hri{r}{i}_d$ for large $r$. To that end, recall that for all $k\in \N$ 
\[
\sum _{n\geq 0}n^kt^n \ = \ \frac{A_k(t)}{(1-t)^{k+1}},
\]
where $A_k(t)$ denotes the $k$-th Eulerian polynomial. Eulerian polynomials are well-studied and are known to be symmetric and have only nonpositive roots which are all distinct. The following proposition draws a connection to the results of Brenti and Welker \cite{BrentiWelker} and Beck and Stapledon~\cite{Beck2010}. 
\begin{prop}\label{prop:limitsofa}
For all $d\geq 1$ and all $i\in \N$
\[
\lim_{r\rightarrow \infty} \frac{a\hri{r}{i}_d (t)}{r^{d-1}}\ = \ \lim_{r\rightarrow \infty} t\frac{a\hri{r}{r-i}_d (t)}{r^{d-1}} \ = \  \frac{A_{d-1} (t) }{(d-1)!}.
\]
In particular, the roots of $a\hri{r}{i}_d (t)$ converge to the roots of the Eulerian polynomial $A_{d-1}(t)$ when $r$ goes to infinity.
\end{prop}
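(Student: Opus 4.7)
The plan is to prove both limits coefficient by coefficient via the explicit closed form $(1+t+\cdots+t^{r-1})^d = (1-t^r)^d/(1-t)^d$ and then deduce convergence of roots by continuity. Expanding the numerator by the binomial theorem and using $[t^m](1-t)^{-d}=\binom{m+d-1}{d-1}$, I will read off
\[
[t^n]\, a_d\hri{r}{i}(t) \;=\; \sum_{j=0}^{\min(n,d)}(-1)^j\binom{d}{j}\binom{(n-j)r+i+d-1}{d-1}.
\]
For fixed $n$ and $r\to\infty$ each summand is a polynomial in $r$ of degree $d-1$ with leading term $(n-j)^{d-1}r^{d-1}/(d-1)!$, the $j=n$ term contributing only a constant that vanishes after dividing by $r^{d-1}$. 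Hence
\[
\lim_{r\to\infty}\frac{[t^n]\, a_d\hri{r}{i}(t)}{r^{d-1}} \;=\; \frac{1}{(d-1)!}\sum_{j=0}^{n}(-1)^j\binom{d}{j}(n-j)^{d-1}.
\]

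The next step is to identify this limit with $[t^n]\,A_{d-1}(t)/(d-1)!$. For this I multiply the defining relation $\sum_{m\geq 0}m^{d-1}t^m = A_{d-1}(t)/(1-t)^d$ by $(1-t)^d$ and extract the coefficient of $t^n$; the result is precisely the alternating binomial sum above. This proves the first limit coefficient-wise, hence as polynomials (all of degree at most $d-1$). The second limit follows from the same computation with $r-i$ in place of $i$: the asymptotic factor $(n-j)r$ is replaced by $(n+1-j)r$, giving $[t^n]\,a_d\hri{r}{r-i}(t)/r^{d-1} \to [t^{n+1}]\,A_{d-1}(t)/(d-1)!$, and multiplication by $t$ shifts indices by one to recover the same limit $A_{d-1}(t)/(d-1)!$.

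For the claim about roots: $A_{d-1}(t)$ is known to have $d-1$ distinct nonpositive real roots (a classical result of Frobenius), and for $r > d+i$ the polynomial $a_d\hri{r}{i}(t)$ has degree exactly $d-1$ with top coefficient $r^{d-1}/(d-1)! + O(r^{d-2})$ (since $A(d-1,d-1)=1$), hence tending to $1/(d-1)! \neq 0$. Since the limit polynomial has only simple roots, coefficient-wise convergence of $a_d\hri{r}{i}(t)/r^{d-1}$ to $A_{d-1}(t)/(d-1)!$ implies convergence of the roots by standard continuity of simple roots under perturbation of coefficients. The only real obstacle is the combinatorial bookkeeping aligning the alternating binomial sum with the Eulerian coefficient formula; no subtle analytic issue arises, since all polynomials involved have bounded degree.
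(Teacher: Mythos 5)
Your argument is correct, and it reaches the same conclusion as the paper's proof via a more hands-on computation. The paper observes that $a\hri{r}{i}_d = U_{r,i}^d\,1$ is the numerator obtained from the sequence $f(n)=\binom{n+d-1}{d-1}$, expands $f(rn+i)=\alpha_{d-1}r^{d-1}n^{d-1}+\cdots+\alpha_0$ as a polynomial in $n$ with $\alpha_{d-1}=1/(d-1)!$, applies linearity of $U_{r,i}^d$ together with $\sum_{n\ge 0}n^{k}t^n = A_k(t)/(1-t)^{k+1}$ to write $a\hri{r}{i}_d/r^{d-1}=\sum_k \alpha_k r^{k-d+1}(1-t)^{d-1-k}A_k(t)$, and reads off the dominant term $\alpha_{d-1}A_{d-1}$. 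You instead extract $[t^{rn+i}](1-t^r)^d/(1-t)^d$ as an alternating binomial sum, take the termwise $r\to\infty$ asymptotics, and identify the limiting value with $[t^n]A_{d-1}(t)$ via the very same generating-function identity multiplied through by $(1-t)^d$. The underlying ingredient is identical; the difference is that you work at the level of individual coefficients and have to juggle the alternating sum $\sum_j(-1)^j\binom{d}{j}(n-j)^{d-1}$, while the paper stays at the level of formal power series and lets the Eulerian generating function absorb that bookkeeping. The paper's route is a bit slicker, yours is a bit more elementary; both are valid. Your remarks that $a\hri{r}{i}_d$ has degree exactly $d-1$ for $r>d+i$, that the normalized leading coefficient converges to $1/(d-1)!$, and that continuity of simple roots under coefficient-wise convergence then gives convergence of the roots, are precisely what justify the "in particular" clause that the paper leaves to the reader. (A small side remark: for $d=1$ the factor-of-$t$ identity in the second limit degenerates, as $a\hri{r}{j}_1\equiv 1$ but $A_0=1$; this boundary case is glossed over in the paper's calculation as well and does not affect the root-convergence statement since $A_0$ has no roots.)
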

\begin{proof}
Let $f(n)={n+d-1 \choose d-1}$. Then $\sum _{n\geq 0} f(n)t^n=\frac{1}{(1-t)^d}$. We observe that $f(rn+i)$ is a polynomial of degree $d-1$ in $rn$ with leading coefficient $\tfrac{1}{(d-1)!}$, say $f(rn+i)=\alpha _{d-1}n^{d-1}r^{d-1}+\cdots + \alpha _0$. Then, by definition,
\begin{align*}
\frac{a\hri{r}{i}_d (t)}{r^{d-1}}\ = \ \frac{U_{r,i}^d 1}{r^{d-1}} \ &= \ \alpha_{d-1}A_{d-1}+ \alpha _{d-2}r^{-1}(1-t)A_{d-2} + \cdots + \alpha _0 r^{-d+1}(1-t)^{d-1}A_0\\
 &\xrightarrow[r \to \infty]{} \frac{A_{d-1}(t)}{(d-1)!}.
\end{align*}
Further, $f(rn+r-i)=f(r(n+1)-i)$ is a polynomial of degree $d-1$ in $r(n+1)$ with leading coefficient $\tfrac{1}{(d-1)!}$, say $f(r(n+1)-i)=\beta _{d-1}(n+1)^{d-1}r^{d-1}+\cdots + \beta _0$. Then
\begin{align*}
\frac{a\hri{r}{r-i}_d (t)}{r^{d-1}}\ &= \ \frac{U_{r,r-i}^d 1}{r^{d-1}} \\ 
&= \ \beta_{d-1}t^{-1}A_{d-1}+\cdots + \beta _{1}r^{-d}(1-t)^{d-2}t^{-1}A_{1} + \beta _0 r^{-d+1}(1-t)^{d-1}A_0\\
&\xrightarrow[r \to \infty]{} \frac{t^{-1}A_{d-1}(t)}{(d-1)!}.
\end{align*}
\end{proof}
Together with Lemma \ref{lem:lincomb} this yields
\begin{cor}[{\cite{Beck2010,BrentiWelker}}]\label{cor:limitsoff}
Let $h(t)\neq 0$ be a polynomial in $\R[t]$. Then for all $d\geq 1$ and all $i\in \N$
\[
\lim_{r\rightarrow \infty} \frac{U_d \hri{r}{i} h}{r^{d-1}} \ = \ h(1)\frac{A_{d-1} (t) }{(d-1)!}.
\]
In particular, if $h(1)\neq 0$ then the roots of $U_d \hri{r}{i} h$ converge to the roots of the Eulerian polynomial $A_{d-1}(t)$ when $r$ goes to infinity.
\end{cor}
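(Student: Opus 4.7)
The plan is to combine Lemma~\ref{lem:lincomb} with Proposition~\ref{prop:limitsofa}; the corollary is essentially a termwise limit computation. Writing $h = h_0 + h_1 t + \cdots + h_s t^s$ and choosing $r > s$, Lemma~\ref{lem:lincomb} expresses
\[
U_{r,i}^d h(t) \;=\; \sum_{j=0}^{i} h_j\, a\hri{r}{i-j}_d \;+\; t \sum_{j=i+1}^{s} h_j\, a\hri{r}{r+i-j}_d,
\]
where the convention $h_j = 0$ for $j > s$ together with $r > s$ ensures that no ``wrap around'' terms appear. The crucial observation is that in the first sum the indices $i-j$ are fixed as $r$ grows, while in the second sum each index $r+i-j$ is of the form $r-k$ with $k = j-i \in \{1,\ldots,s-i\}$ fixed.

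Next I would divide by $r^{d-1}$ and pass to the limit coefficientwise. Proposition~\ref{prop:limitsofa} supplies exactly the two required limits: $a\hri{r}{i-j}_d / r^{d-1} \to A_{d-1}(t)/(d-1)!$ for the first group, and $t\,a\hri{r}{r-k}_d / r^{d-1} \to A_{d-1}(t)/(d-1)!$ for the second. In both groups every $h_j$ contributes the same factor $A_{d-1}(t)/(d-1)!$, so the sum collapses to
\[
\lim_{r \to \infty} \frac{U_{r,i}^d h(t)}{r^{d-1}} \;=\; \Bigl(\sum_{j=0}^{s} h_j\Bigr) \frac{A_{d-1}(t)}{(d-1)!} \;=\; h(1)\,\frac{A_{d-1}(t)}{(d-1)!},
\]
which is the first assertion.

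For the statement about roots, I would appeal to the standard continuity of the roots of a polynomial as a function of its coefficients, provided that the degree does not drop in the limit. Under the assumption $h(1) \neq 0$, the limit polynomial $h(1) A_{d-1}(t)/(d-1)!$ has nonzero leading coefficient, so the roots of $U_{r,i}^d h$ converge to those of $A_{d-1}$. The only point that needs a moment of care, and the step I would treat carefully, is the degree bookkeeping: $U_{r,i}^d h$ a priori has degree at most $d$, whereas the normalised limit has degree only $d-1$, so one should observe that any extra root escapes to $-\infty$ as $r \to \infty$ while the remaining $d-1$ roots converge to those of the Eulerian polynomial. Everything else is routine given the tools already established.
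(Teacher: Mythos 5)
Your proof is correct and follows exactly the route the paper intends: the corollary is stated as an immediate consequence of Lemma~\ref{lem:lincomb} and Proposition~\ref{prop:limitsofa}, with no further argument given, and your decomposition into fixed-index and $r-k$-index groups followed by a termwise application of the two limits is precisely how that deduction goes. Your remark about the degree dropping from $d$ to $d-1$ and an extra root escaping to infinity is a genuine subtlety that the paper leaves implicit, and you handle it correctly.
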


\section{Applications}\label{section:applications}

\subsection{Combinatorially positive valuations}\label{subsection:combpos}
In this section, let $\Lambda \subseteq \mathbb{R}^m$ be a discrete additive 
subgroup or a vector subspace over a subfield of $\R$. Let 
$\mathcal{P}(\Lambda)$ denote the collection of convex polytopes with vertices 
in $\Lambda$. A translation-invariant valuation or 
$\Lambda$-valuation is a map $\phi \colon \mathcal{P}(\Lambda) 
\rightarrow G$, where $G$ is an abelian group, such that $\phi(\emptyset)=0$,
\[
\phi (P\cup Q)=\phi(P)+\phi(Q)-\phi(P\cap Q)
\]
whenever $P,Q,P\cup Q,P\cap Q\in \mathcal{P}(\Lambda)$, and
\[
\phi (P+t)=\phi(P)
\]
for all $P\in \mathcal{P}(\Lambda)$ and $t\in \Lambda$. A $\Lambda$-valuation 
is called even if $\phi(P)=\phi(-P)$ for all $P\in 
\mathcal{P}(\Lambda)$. The standard example for $\Lambda=\R^m$ is the volume. An important example for $\Lambda=\Z^m$ is the 
discrete volume $E(P):=|P\cap \Z^m|$. Ehrhart \cite{ehrhartRational} 
showed that the 
lattice point 
enumerator agrees with a polynomial under dilation of the polytope. More 
specifically, for a $d$-dimensional polytope $P\in 
\mathcal{P}(\Lambda)$ the function $E_P(n):=E(nP)$ is given by a polynomial of 
degree $d$ for 
$n\geq 0$. McMullen~\cite{mcmullenEuler} showed more generally that for a 
$\Lambda$-valuation $\phi$ 
and a $d$-dimensional polytope $P\in \mathcal{P}(\Lambda)$ the function 
$\phi_P(n):=\phi(nP)$ agrees with a polynomial of degree at most $d$. In terms 
of generating functions, this can be expressed as
\[
\sum_{n\geq 0}\phi_P(n)t^n \ = \ \frac{h^\phi(P)(t)}{(1-t)^{d+1}},
\]
where $h^\phi(P)(t)=h_0^\phi(P)+\cdots +h_s^\phi (P)t^s$ is a polynomial with 
coefficients in $G$ of degree $s$ for some $s\leq d$. This is equivalent to
\begin{equation}\label{eq:binombasis}
\phi _P (n) \ = \ h^\phi _0 (P){n+d\choose d} \ + \ h^\phi _1 (P){n+d-1\choose  d} + \cdots + h^\phi _s (P){n+d-s\choose  d}.
\end{equation}
McMullen \cite{mcmullenIE} showed, that all $\Lambda$-valuations satisfy an inclusion-exclusion property. Thus, we can define the value of $\phi$ 
on the relative interior of a $d$-dimensional polytope $P\in 
\mathcal{P}(\Lambda)$ as
\[
\phi(\relint P):=\sum _{F}(-1)^{\dim P-\dim F}\phi(F),
\]
where the sum is taken over all faces $F$ of $P$. Therefore, by M\"obius inversion 
\[
\phi (P) \ = \sum _{F}\phi(\relint F).
\]
In the following, let $G$ be an ordered abelian group. We call a $\Lambda$-valuation $\phi$ combinatorially 
positive if $h_0^\phi(P),\ldots, h_s^\phi (P)\geq 0$ for all $P\in 
\mathcal{P}(\Lambda)$. Together with Sanyal, we characterized in 
\cite{jochemko2015combinatorial} all 
combinatorially positive 
$\Lambda$-valuations.
\begin{thm}[{\cite[Theorem 3.6]{jochemko2015combinatorial}}]\label{thm:combpositiveRamanKath}
    Let $\phi \colon  \mathcal{P}(\Lambda)\rightarrow G$ be a 
    $\Lambda$-valuation. Then the following are equivalent:
    \begin{enumerate}
        \item[(i)] $\phi$ is combinatorially positive.
        \item[(ii)] $\phi(\relint \Delta)\geq 0$ for all simplices $\Delta \in 
        \mathcal{P}(\Lambda)$.
    \end{enumerate}
\end{thm}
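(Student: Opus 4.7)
For direction (i) $\Rightarrow$ (ii), I would derive the stronger identity $\phi(\relint \Delta) = h_d^\phi(\Delta)$ for every $d$-dimensional simplex $\Delta \in \Pol{\LL}$, from which (ii) is immediate under hypothesis (i). The key tool is McMullen's Ehrhart--Macdonald reciprocity for translation-invariant valuations, asserting $\phi(\relint(n\Delta)) = (-1)^d \phi_\Delta(-n)$ for every integer $n \geq 1$. Specializing to $n = 1$ and substituting the binomial expansion \eqref{eq:binombasis} yields $\phi(\relint \Delta) = (-1)^d \sum_{i=0}^s h_i^\phi(\Delta) \binom{d-1-i}{d}$. Since $\binom{d-1-i}{d}$ vanishes for $0 \leq i \leq d-1$ and equals $(-1)^d$ at $i = d$, only the top $h^\ast$-coefficient of $\Delta$ survives, giving the asserted identity.

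For the converse (ii) $\Rightarrow$ (i), the plan is to decompose an arbitrary $P \in \Pol{\LL}$ of dimension $d$ into half-open simplices. Choose a triangulation $T$ of $P$ with vertices in $\LL$ and, following a standard visibility construction from a generic point, form for each top-dimensional $\Delta \in T$ a half-open simplex $H_\Delta := \Delta \setminus \bigcup_{j \in J_\Delta} F_j$ by removing an appropriate subset of facets so that $P = \bigsqcup_\Delta H_\Delta$ is a disjoint union. Since $Q \mapsto h^\phi(Q)$ is coefficient-wise a $\Lambda$-valuation (after extending $\phi$ to half-open polytopes by inclusion-exclusion), additivity over the disjoint decomposition gives the polynomial identity $h^\phi(P)(t) = \sum_\Delta h^\phi(H_\Delta)(t)$. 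Thus it suffices to prove that $h^\phi(H)$ has nonnegative coefficients for every half-open lattice $d$-simplex $H$.

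The technical heart is then an explicit formula expressing $h^\phi(H)(t)$, for $H = \Delta \setminus \bigcup_{j \in J} F_j$, as a nonnegative combination of the values $\phi(\relint \Delta_S)$ over subfaces $\Delta_S$ of $\Delta$ with $S \supseteq J$. The natural approach is through the fundamental-parallelepiped decomposition of the homogenized cone $\cone\{(v_i,1) : 0 \leq i \leq d\} \subseteq \R^{m+1}$: each height-$n$ slice of the cone recovers $nH$, while the cone itself tiles as disjoint $\LL$-translates of the half-open fundamental parallelepiped $\Pi_\Delta$, and every height-$k$ cross-section of $\Pi_\Delta$ decomposes up to $\LL$-translation into pieces that coincide with relatively open subfaces $\relint \Delta_S$ for specific $S \supseteq J$; by translation-invariance, each such piece contributes exactly $\phi(\relint \Delta_S)$ to $[t^k]h^\phi(H)(t)$. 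The main obstacle is to make this slicewise identification rigorous in the generality of $\Lambda$-valuations, where literal lattice-point counting is unavailable, by turning it into a genuine polytope identity in $\Pol{\LL}$ modulo inclusion-exclusion; once established, the nonnegativity of every coefficient of $h^\phi(H)(t)$, and hence of $h^\phi(P)(t)$, follows immediately from hypothesis (ii).
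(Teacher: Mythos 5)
The paper does not prove Theorem~\ref{thm:combpositiveRamanKath} itself; it is quoted from \cite{jochemko2015combinatorial}, so I am evaluating your argument on its own rather than against a proof in this text.

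For (i)~$\Rightarrow$~(ii), you have the reciprocity the wrong way around for a general (non-even) $\Lambda$-valuation. McMullen's reciprocity gives $\phi_\Delta(-n)=(-1)^{\dim\Delta}\phi(\relint(-n\Delta))$, not $(-1)^{\dim\Delta}\phi(\relint(n\Delta))$; specializing at $n=1$ and using \eqref{eq:binombasis} yields $h_d^\phi(\Delta)=\phi(\relint(-\Delta))$, which is exactly the formula recorded in the paper just after the theorem statement. Your version, $h_d^\phi(\Delta)=\phi(\relint\Delta)$, holds only for even $\phi$. The implication still follows, since $\Delta\mapsto -\Delta$ is a bijection of $\Pol{\LL}$ onto itself, but as stated the step is incorrect and you should apply it to $-\Delta$.

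For (ii)~$\Rightarrow$~(i), the overall architecture (half-open simplicial decomposition of $P$, tiling the cone over each half-open simplex by translates of the fundamental parallelepiped, reading off $h_k^\phi$ from the height-$k$ cross-section of the parallelepiped) is sound, and the parallelepiped tiling does turn $h_k^\phi(\Delta)=\phi(\Pi_k)$ into a genuine polytopal identity because each $\Pi_k$ is a bounded half-open $\Lambda$-polytope and translation-invariance handles the multiplicities $\binom{n-k+d}{d}$. However, your key geometric claim is false: the height-$k$ cross-section of $\Pi_\Delta$ is a half-open \emph{hypersimplex} $\{\sum_i\lambda_i v_i:\sum_i\lambda_i=k,\ 0\le\lambda_i<1\}$, and for $2\le k\le d-1$ and $d\ge 3$ this is not a disjoint union of translates of relative interiors $\relint\Delta_S$ of faces of $\Delta$ (for $d=3$, $k=2$, the closure of the top cell is an octahedron). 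So the "slicewise identification" with subfaces $\relint\Delta_S$ does not exist. The fix is easy and actually shows that your identification was an overreach you did not need: hypothesis (ii) applies to \emph{all} $\Lambda$-simplices, not only faces of $\Delta$. Since the closure of each $\Pi_k$ has vertices in $\Lambda$ (they are partial vertex sums $\sum_{i\in T}v_i$), a $\Lambda$-triangulation of $\overline{\Pi_k}$ compatible with the half-open structure writes $\Pi_k$ as a disjoint union of relative interiors of lattice simplices, whence $\phi(\Pi_k)\ge 0$ directly from (ii). As written, though, the slice-identification step is a genuine gap, and the rest of that paragraph builds on it without repair.
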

From the proof of {\cite[Theorem 3.6]{jochemko2015combinatorial}} it furthermore follows that $h_0^\phi(P)=\phi(\lbrace 0\rbrace)$, $h_1^\phi(P)=\phi(P)-(d+1)\phi(\lbrace 0 \rbrace)$ and  $h_d^\phi (P)=\phi(\relint (-P))$. The following Proposition generalizes a well-known inequality for the coefficients of the Ehrhart $h^\ast$-polynomial.
\begin{prop}\label{prop:h_1}
    Let $\phi \colon  \mathcal{P}(\Lambda)\rightarrow G$ be an even and 
    combinatorially 
    positive $\Lambda$-valuation and let $P 
    \in \mathcal{P}(\Lambda)$ be a $d$-dimensional polytope. Then $h_1^\phi 
    (P)\geq h_d^\phi (P).$
\end{prop}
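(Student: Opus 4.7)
The plan is to rewrite both $h_1^\phi(P)$ and $h_d^\phi(P)$ as explicit evaluations of $\phi$ and then exploit the fact that a $d$-dimensional polytope has at least $d+1$ vertices. First I would invoke the three identities recorded just after Theorem \ref{thm:combpositiveRamanKath}: $h_0^\phi(P)=\phi(\lbrace 0\rbrace)$, $h_1^\phi(P)=\phi(P)-(d+1)\phi(\lbrace 0\rbrace)$ and $h_d^\phi(P)=\phi(\relint(-P))$. Since $\phi$ is even and the faces of $-P$ are exactly the $-F$ for $F$ a face of $P$, the inclusion-exclusion formula defining $\phi(\relint\cdot)$ collapses to $\phi(\relint(-P))=\phi(\relint P)$. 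The inequality we have to prove thus reduces to
\[
\phi(P)-\phi(\relint P)\ \geq\ (d+1)\,\phi(\lbrace 0\rbrace).
\]

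Next I would apply M\"obius inversion of the inclusion-exclusion identity to rewrite the left-hand side as $\sum_{F\subsetneq P}\phi(\relint F)$, the sum ranging over proper faces of $P$. I would split this sum into the vertex contributions and the contributions from faces of dimension at least $1$. Each vertex $v$ satisfies $\phi(\relint\lbrace v\rbrace)=\phi(\lbrace v\rbrace)=\phi(\lbrace 0\rbrace)$ by translation-invariance, and a $d$-dimensional polytope has at least $d+1$ vertices; combined with $\phi(\lbrace 0\rbrace)=h_0^\phi(P)\geq 0$, the vertex contributions already amount to at least $(d+1)\phi(\lbrace 0\rbrace)$.

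The step I expect to require the most care is showing that all the remaining summands $\phi(\relint F)$, for proper faces $F$ of positive dimension, are nonnegative, since Theorem \ref{thm:combpositiveRamanKath} phrases combinatorial positivity only in terms of simplices and faces of $P$ need not be simplicial. Here I would use the identity $h_e^\phi(-F)=\phi(\relint F)$ applied in dimension $e=\dim F$ (a direct instance of the general formula $h_d^\phi(P)=\phi(\relint(-P))$) together with combinatorial positivity of $\phi$ applied to the polytope $-F\in\mathcal{P}(\Lambda)$ to deduce $\phi(\relint F)\geq 0$. Assembling these three pieces then produces the desired inequality.
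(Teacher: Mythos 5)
Your argument is correct but takes a genuinely different route from the paper. Both proofs begin identically (evenness gives $h_d^\phi(P)=\phi(\relint P)$, and both reduce the claim to $\phi(P)-\phi(\relint P)\geq(d+1)\phi(\{0\})$), but then they diverge. The paper picks a triangulation $\mathcal{C}$ of $P$ with vertex set among the vertices of $P$, writes $\phi(P)=\sum_{C\in\mathcal{C}}\phi(\relint C)$, and discards terms: since every cell of $\mathcal{C}$ is a \emph{simplex}, the characterization in Theorem~\ref{thm:combpositiveRamanKath}(ii) gives $\phi(\relint C)\geq 0$ directly. You instead decompose over the \emph{face lattice} of $P$ via $\phi(P)-\phi(\relint P)=\sum_{F\subsetneq P}\phi(\relint F)$. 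Because proper faces of $P$ need not be simplices, Theorem~\ref{thm:combpositiveRamanKath}(ii) alone doesn't suffice, and you correctly patch this by invoking the defining condition of combinatorial positivity on the polytope $-F$: $\phi(\relint F)=h_{\dim F}^\phi(-F)\geq0$. Both arguments are sound; the paper's has the advantage of only ever needing nonnegativity on open simplices, while yours has the advantage of avoiding the choice of a triangulation and working intrinsically with the face lattice. Your handling of the non-simplicial faces — the step you flag as delicate — is exactly right.
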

\begin{proof}
Since $\phi$ is even we obtain $h_d^\phi (P)=\phi(\relint (P))$. Let $\mathcal{C}$ be a triangulation of $P$ using only 
vertices of $P$. Then
\begin{align*}
\phi(P) =\sum _{C\in \mathcal{C}}\phi(\relint (C)) & \geq \sum _{C\in 
\mathcal{C}\atop \relint C \subseteq \relint P}\phi(\relint (C))+\sum _{C\in 
\mathcal{C}\atop \dim C=0}\phi(\relint (C)) \\
& \geq \phi(\relint P) + 
(d+1)\phi(\lbrace 0 \rbrace)
\end{align*}
since every polytope of dimension $d$ has at least $d+1$ vertices.
\end{proof}

From Theorem 
    \ref{thm:maintheorem} we obtain the following corollary.
\begin{cor}\label{cor:thmvaluations}
    Let $\phi \colon  \mathcal{P}(\Lambda)\rightarrow \R$ be a combinatorially 
    positive $\Lambda$-valuation and let 
    $P 
    \in \mathcal{P}(\Lambda)$ be a $d$-dimensional polytope. Let 
    $s=\deg h^\phi(P)$. Then $h^\phi(rP)(t)$ has only real roots for all $r\geq s$, and for $r>s$ all these roots are distinct. The roots are negative whenever $\phi (\lbrace 0 \rbrace)>0$. If $\phi$ is even, then all roots are distinct whenever  $r\geq \min \{s+1,d\}$.
\end{cor}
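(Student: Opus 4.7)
The plan is to reduce everything to Theorem~\ref{thm:maintheorem} via the identification $h^\phi(rP)(t) = U_{r,0}^{d+1} h^\phi(P)(t)$. Indeed, since $\phi_P(n)$ has generating series $h^\phi(P)(t)/(1-t)^{d+1}$ and $\phi_{rP}(n) = \phi_P(rn)$, the definition of the operator $U_{r,0}^{d+1}$ (equivalently, Lemma~\ref{lem:hecke} applied with dimension $d+1$ and $i=0$) gives exactly this identification. Combinatorial positivity guarantees that $h^\phi(P)$ has nonnegative coefficients, so Theorem~\ref{thm:maintheorem} applies with $h = h^\phi(P)$ of degree $s$.

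Applying Theorem~\ref{thm:maintheorem} with $i = 0$ immediately yields the first two assertions: for every $r \geq s$ the polynomial $h^\phi(rP)(t)$ has only nonpositive real roots, these roots are negative precisely when $h_0^\phi(P) > 0$ (which equals $\phi(\{0\}) > 0$ by the formula for $h_0^\phi$ recorded after Theorem~\ref{thm:combpositiveRamanKath}), and for $r > s$ clause~(i) of the ``Moreover'' gives distinctness.

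The only delicate step is establishing distinctness when $r = s$ for even valuations, under the hypothesis $r \geq \min\{s+1,d\}$. Since $s \leq d$, the case $r = s$ combined with $r \geq \min\{s+1, d\}$ forces $s = d$, so $h_d^\phi(P) > 0$. If $d = s = 1$, clause~(ii) applies directly via the subcase ``$s = 1$''. If $s = d \geq 2$, I invoke Proposition~\ref{prop:h_1}, which uses evenness to deliver $h_1^\phi(P) \geq h_d^\phi(P) > 0$; then $j = 1$ satisfies $0 < j < s$ and $h_j^\phi(P) > 0$, so the second subcase of clause~(ii) in Theorem~\ref{thm:maintheorem} applies and produces distinct roots.

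The main thing to get right is the bookkeeping: that $s = \deg h^\phi(P) \leq d$, which forces $s = d$ in the boundary case $r = s \geq d$, and then that evenness (through Proposition~\ref{prop:h_1}) is precisely what supplies the nonvanishing of an interior coefficient needed by clause~(ii). No genuinely new computation is needed beyond these translations; the substance is entirely in Theorem~\ref{thm:maintheorem} and Proposition~\ref{prop:h_1}.
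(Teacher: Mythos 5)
Your proof is correct and takes essentially the same approach as the paper: reduce to Theorem~\ref{thm:maintheorem} via $h^\phi(rP)=U_{r,0}^{d+1}h^\phi(P)$, use $h_0^\phi(P)=\phi(\{0\})$ for the negativity claim, and handle the boundary case $r=s=d$ for even $\phi$ by invoking Proposition~\ref{prop:h_1} to supply a positive interior coefficient $h_1^\phi(P)>0$ (or the subcase $s=1$) in clause~(ii) of the theorem.
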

\begin{proof}
It remains to prove the case $r=d=s$ when $\varphi$ is even. Either $s=1$ or $s>1$ and we observe $h_1^\phi(P)\geq 
    h_d^\phi (P)>0$ by Proposition \ref{prop:h_1}. In both cases we conclude by 
    Theorem \ref{thm:maintheorem}. Furthermore, by 
    Theorem \ref{thm:maintheorem} all roots are negative if and only if  $h_0^\phi(P)=\phi(\{0\})>0$.
\end{proof}
Applied to the discrete volume $E\colon \mathcal{P}(\Z^m)\rightarrow 
\Z$ this proves a strengthened version of a conjecture of Beck and Stapledon 
\cite{Beck2010}.
\begin{cor}
    Let $P$ be a $d$-dimensional lattice polytope. Let $s=\deg h^\ast (P)$. 
    Then
    \[
    \sum _{n\geq 0}E_{rP}(n)t^n=\frac{h^\ast(rP)(t)}{(1-t)^{d+1}},
    \]
    where $h^\ast(rP)(t)$ has only negative, real roots for all $r\geq s$ and all roots are distinct if $r\geq \min \{s+1,d\}$.
\end{cor}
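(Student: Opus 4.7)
The plan is to derive this statement as a direct specialization of Corollary~\ref{cor:thmvaluations} to the lattice-point enumerator $E \colon \mathcal{P}(\Z^m) \to \R$ defined by $E(P) = |P \cap \Z^m|$. Once the three hypotheses of that corollary (valuation, combinatorial positivity, evenness) are verified for $E$, and the value $E(\{0\})$ is observed to be positive, the result transfers verbatim, with $h^\phi = h^\ast$.

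First I would recall that $E$ is a translation-invariant $\Z^m$-valuation: translation invariance under lattice translations is immediate from $x \mapsto x + t$ being a bijection on $\Z^m$ when $t \in \Z^m$, and the inclusion--exclusion identity $E(P \cup Q) = E(P) + E(Q) - E(P \cap Q)$ holds whenever $P,Q,P\cup Q,P \cap Q \in \mathcal{P}(\Z^m)$ because counting lattice points is itself a valuation on sets. Next, combinatorial positivity of $E$ is exactly Stanley's classical theorem~\cite{Stanley78}: the Ehrhart $h^\ast$-polynomial has nonnegative integer coefficients, so $h^\ast_j(P) = h^E_j(P) \ge 0$ for all $0 \le j \le s$ and all $P \in \mathcal{P}(\Z^m)$. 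Finally, $E$ is even because $x \mapsto -x$ is a bijection $\Z^m \to \Z^m$ preserving all lattice polytopes, whence $E(-P) = E(P)$.

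Having checked all three hypotheses, I would apply Corollary~\ref{cor:thmvaluations} with $\phi = E$ and $G = \R$. For the statement about negativity, note that $\phi(\{0\}) = E(\{0\}) = |\{0\} \cap \Z^m| = 1 > 0$, so the corollary guarantees that $h^\ast(rP)(t) = h^E(rP)(t)$ has only \emph{negative} real roots for all $r \ge s$. For the distinctness claim, since $E$ is even and $s = \deg h^\ast(P)$, the corollary yields that all roots are distinct whenever $r \ge \min\{s+1, d\}$ (using the dimension of $P$ as $d$; note that Corollary~\ref{cor:thmvaluations} is stated with denominator $(1-t)^{d+1}$, which matches the Ehrhart convention here).

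There is essentially no obstacle — this is a pure bookkeeping step, and the only subtlety is making sure no hypothesis of Corollary~\ref{cor:thmvaluations} is silently assumed for $E$ without justification. In particular, the evenness of $E$ (needed to upgrade distinctness from $r > s$ to $r \ge \min\{s+1, d\}$, i.e.\ to cover the boundary case $r = d = s$) should be spelled out explicitly, since it is the hypothesis that distinguishes the bound $\min\{s+1, d\}$ from the weaker bound $s+1$ one would otherwise get.
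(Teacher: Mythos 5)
Your proposal is correct and is exactly what the paper does: the corollary is obtained by specializing Corollary~\ref{cor:thmvaluations} to the lattice-point enumerator $E$, and the paper signals this with the phrase ``Applied to the discrete volume $E\colon \mathcal{P}(\Z^m)\rightarrow \Z$'' immediately before the statement. Your verification of the three hypotheses (valuation, combinatorial positivity via Stanley's nonnegativity theorem or equivalently Theorem~\ref{thm:combpositiveRamanKath}, evenness of $E$) together with $E(\{0\})=1>0$ is precisely the bookkeeping left implicit in the paper.
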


The following example shows, that the restriction to even combinatorially positive valuations in Corollary \ref{cor:thmvaluations} and Proposition \ref{prop:h_1} is in general necessary.

\begin{example}
 For every translation vector $v\in \R^2$ the map
\[
\dvol ^ v \colon P \mapsto \dvol (P-v)=|(P-v)\cap \Z^2|
\]
defines a $\mathbb{Z}^2$-valuation, which is by Theorem \ref{thm:combpositiveRamanKath} combinatorially positive since $\dvol^v(\relint \Delta) = |\relint (\Delta-v)\cap \Z^2|\geq 0$ for all lattice simplices $\Delta$. Now let $v=(\lambda,\lambda)$ for $0< \lambda <<1$. For the simplex $S=\conv (\{0,-e_1,-e_2\})$ we then have
\[
(-S-v) \cap \Z^2 =\{0\} \text{ and } (S-v) \cap \Z^2 =\{\},
\]
(see Figure \ref{fig:1}) and therefore $h_1^\phi(S)=0\not > h_2^\phi(S)=1$.
\begin{figure}[t]
\begin{tikzpicture}
\filldraw[green!50] (1.95,1.95) -- (1.95,0.95) -- (0.95,1.95) -- (1.95,1.95);
\filldraw [gray] (0,0) circle (1pt);
\filldraw [gray] (1,0) circle (1pt);
\filldraw [gray] (2,0) circle (1pt);
\filldraw [gray] (3,0) circle (1pt);
\filldraw [gray] (0,1) circle (1pt);
\filldraw [gray] (1,1) circle (1pt);
\filldraw [gray] (2,1) circle (1pt);
\filldraw [gray] (3,1) circle (1pt);
\filldraw [gray] (0,2) circle (1pt);
\filldraw [gray] (1,2) circle (1pt);
\filldraw [gray] (2,2) circle (1pt) node[anchor=west] {0};
\filldraw [gray] (3,2) circle (1pt);
\filldraw [gray] (0,3) circle (1pt);
\filldraw [gray] (1,3) circle (1pt);
\filldraw [gray] (2,3) circle (1pt);
\filldraw [gray] (3,3) circle (1pt);
\node at (1.5,1.5) {$S-v$};
\end{tikzpicture}
\hspace{2cm}
\begin{tikzpicture}
\filldraw[green!50] (0.95,0.95) -- (1.95,0.95) -- (0.95,1.95) -- (0.95,0.95);
\filldraw [gray] (0,0) circle (1pt);
\filldraw [gray] (1,0) circle (1pt);
\filldraw [gray] (2,0) circle (1pt);
\filldraw [gray] (3,0) circle (1pt);
\filldraw [gray] (0,1) circle (1pt);
\filldraw [gray] (1,1) circle (1pt) node[anchor=east] {0};
\filldraw [gray] (2,1) circle (1pt);
\filldraw [gray] (3,1) circle (1pt);
\filldraw [gray] (0,2) circle (1pt);
\filldraw [gray] (1,2) circle (1pt);
\filldraw [gray] (2,2) circle (1pt);
\filldraw [gray] (3,2) circle (1pt);
\filldraw [gray] (0,3) circle (1pt);
\filldraw [gray] (1,3) circle (1pt);
\filldraw [gray] (2,3) circle (1pt);
\filldraw [gray] (3,3) circle (1pt);
\node at (1.5,1.5) {$-S-v$};
\end{tikzpicture}
\caption{$S-v$ and $-S-v$.}
\label{fig:1}
\end{figure}
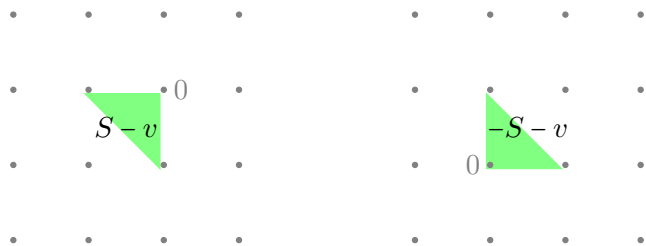 
We obtain
\[
h^{\dvol^v}(S)=t^2.
\]
For the (combinatorially positive) valuation $\tilde{\dvol} = \dvol+\dvol^{v}$ we have
\[
h^{\tilde{\dvol}}(S)=1+t^2.
\]
Lemma \ref{lem:lincomb} gives
\[
U_{2,0}^2 h^{\tilde{\dvol}}(S) = (1+t)a\hri{2}{0}_2 = (1+t)^2
\]
which is real-rooted but with a double root at $t=-1$.
\end{example}

\subsection{Standard graded algebras}\label{subsection:gradedrings}
Let $k$ be a field. A graded $k$-algebra $A=\bigoplus _{n\geq 0}A_n$ is 
standard graded, if $A_0=k$ and it is finitely 
generated in degree $1$. The Hilbert series of $A$ is defined as $H(A,t)=\sum 
_{n\geq 0}\dim _k A_n t^n$. By a Theorem of Hilbert (see e.g.\ \cite[Section 
10.4]{eisenbud2013commutative}) this series is of the form
    \[
    H(A,t)=\frac{h(t)}{(1-t)^d},
    \]
for some polynomial $h$ with $h(0)=1$. Here, $d$ is the Krull dimension of $A$. 
For all $r\geq 1$ and all $0\leq i\leq r-1$ the (shifted) Veronese submodule is 
defined as $A\hri{r}{i}=\bigoplus _{n\geq 0}A_{rn+i}$. Then $A\hri{r}{i}$ is a 
graded $A\hri{r}{0}$-submodule of $A$. The next corollary is a direct consequence of Theorem \ref{thm:maintheorem} and applies to all 
standard graded $k$-algebras $A$ such that the numerator polynomial $h$ of its Hilbert series has only nonnegative coefficients. This is in particular the case if $A$ is a 
Cohen-Macaulay algebra (see e.g.\ \cite[Section 4.4]{winfried1998cohen}).
\begin{cor}
    Let $A=\bigoplus _{n\geq 0}A_n$ be a standard graded $k$-algebra with 
    Hilbert series
    \[
    H(A,t)=\frac{h(t)}{(1-t)^d},
    \]
    such that $h$ is a polynomial of degree $s$ and has nonnegative coefficients. Then for $i\geq 0$ and all $r\geq s-i$ the numerator 
    polynomial of the Hilbert series of the Veronese submodule 
    $A\hri{r}{i}=\bigoplus _{n\geq 0}A_{rn+i}$ has only negative, real roots. If $r>s$ then these roots are distinct.
\end{cor}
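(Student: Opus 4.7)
The plan is to recognize the statement as a direct translation of Theorem~\ref{thm:maintheorem} into the language of Hilbert series and then verify the hypotheses. Setting $a_n := H_A(n) = \dim_k A_n$, the Hilbert series assumption reads $\sum_{n\ge 0} a_n t^n = h(t)/(1-t)^d$ with $h$ a polynomial of degree $s$ and nonnegative coefficients. The Hilbert series of the Veronese submodule $A\hri{r}{i}=\bigoplus_{n\ge 0} A_{rn+i}$ is then
\[
\sum_{n\ge 0} H_A(rn+i)\, t^n \ = \ \sum_{n\ge 0} a_{rn+i}\, t^n \ = \ \frac{U_{r,i}^d h(t)}{(1-t)^d}
\]
by the very definition of the operator $U_{r,i}^d$, so the numerator polynomial one must analyze is precisely $U_{r,i}^d h(t)$.

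First I would apply Theorem~\ref{thm:maintheorem} with this $h$ and $r \ge s-i$ to obtain that $U_{r,i}^d h(t)$ has only nonpositive real roots. To upgrade "nonpositive" to "negative" I would use that $A$ is standard graded, so $A_0 = k$ gives $h_0 = h(0) = 1 > 0$; choosing $j = 0 \le i$ in the second clause of Theorem~\ref{thm:maintheorem} then rules out a root at the origin. For the distinctness assertion when $r > s$, I would simply appeal to condition (i) of the "moreover" part of the theorem, which covers this case with no further hypothesis on the coefficients of $h$.

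There is essentially no obstacle here: once the identification of the numerator as $U_{r,i}^d h(t)$ is in place, each conclusion of the corollary is a clause of Theorem~\ref{thm:maintheorem}, with the single auxiliary observation that the standard-graded hypothesis supplies the required strict positivity $h_0 > 0$.
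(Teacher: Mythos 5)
Your proof is correct and is exactly the argument the paper intends: the paper states the corollary as a direct consequence of Theorem~\ref{thm:maintheorem}, and your identification of the Veronese numerator with $U_{r,i}^d h(t)$, together with the observation that $h_0 = h(0) = 1 > 0$ because $A$ is standard graded (so $A_0=k$), supplies precisely the hypotheses needed for negativity and distinctness.
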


\subsection{Quasi-polynomials}\label{subsection:quasipolys}
Our results naturally extend to quasi-polynomials. A function $f\colon \R \rightarrow \R$ is a quasi-polynomial if there are polynomials $f_0,\ldots, f_{l-1} \in \R[t]$ such that
\[
f(t) \ = \ f_i(t) \quad \text{ whenever } \quad t \equiv i \mod l.
\]
The degree of $f$ is defined as the maximum of the degrees of its constituents  $f_0,\ldots, f_{l-1} \in \R[t]$. The integer $l$ is called quasi-period and is not unique. For every quasi-polynomial $f$  of degree at most $d-1$ and quasi-period $l$ it can be shown
\[
\sum _{n\geq 0} f(n)t^n \ = \ \frac{h(t)}{(1-t^l)^d},
\]
where $h$ is a polynomial with $\deg h <ld$.

The following result applies to all quasi-polynomials whose numerator polynomial has only nonnegative coefficients.
\begin{prop}\label{cor:quasi}
Let $f\colon \R \rightarrow \R$ be a quasi-polynomial of degree $d-1$ and constituents $f_0,\ldots, f_{l-1} \in \R[t]$ and let 
\[
\sum _{n\geq 0} f(n)t^n \ = \ \frac{h(t)}{(1-t^l)^d},
\]
where $h\neq 0$ has only nonnegative coefficients. Then for all $0\leq i <l$ and all $r\geq 1$
\[
\sum _{n\geq 0} f_i(rn)t^n \ = \ \frac{U^d_{r,0}\left(h\hri{l}{i}\right)}{(1-t)^d}
\]
and $U^d_{r,0}\left(h\hri{l}{i}\right)$ has only nonpositive, real roots for $r\geq d$, or $U^d_{r,0}\left(h\hri{l}{i}\right)\equiv 0$ for all $r\geq 1$.
\end{prop}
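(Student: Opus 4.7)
The plan is to reduce to Theorem~\ref{thm:maintheorem} by isolating the subsequence of $\{f(n)\}$ in each residue class $i \bmod l$, and then applying the $U^d_{r,0}$ operator.

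First, using the very definition of $\hri{l}{i}$, split $h(t) = \sum_{j=0}^{l-1} t^j h\hri{l}{j}(t^l)$ and divide by $(1-t^l)^d$ to get
\[
\sum_{n \geq 0} f(n) t^n \;=\; \sum_{j=0}^{l-1} t^j \, \frac{h\hri{l}{j}(t^l)}{(1-t^l)^d}.
\]
Extracting the coefficients at exponents $\equiv i \pmod{l}$ on both sides yields
\[
\sum_{n \geq 0} f(ln+i) t^n \;=\; \frac{h\hri{l}{i}(t)}{(1-t)^d}.
\]
The sequence $n \mapsto f(ln+i) = f_i(ln+i)$ is a polynomial in $n$ of degree at most $d-1$, and reading $f_i(rn)$ in the proposition as the $n$-th term of this $r$-diluted subsequence (namely $f(lrn+i)$), one application of $U^d_{r,0}$ to the displayed identity gives precisely the claimed formula. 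Observe moreover that $\deg h < ld$ forces $\deg h\hri{l}{i} \leq d-1$.

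For the real-rootedness, the key observation is that since $h$ has only nonnegative coefficients, the same is true of each $h\hri{l}{i}$, whose coefficients form a subset of those of $h$. If $h\hri{l}{i} \equiv 0$, then trivially $U^d_{r,0}(h\hri{l}{i}) \equiv 0$ for every $r \geq 1$. Otherwise I apply Theorem~\ref{thm:maintheorem} to $h\hri{l}{i}$ in the role of ``$h$'', with $s := \deg h\hri{l}{i} \leq d-1$ and index $0$: for every $r \geq s$ the polynomial $U^d_{r,0}(h\hri{l}{i})$ has only nonpositive real roots, so the hypothesis $r \geq d$ is more than sufficient.

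The proof is essentially a diagonal extraction followed by a direct invocation of Theorem~\ref{thm:maintheorem}; the only subtlety is confirming the degree bound $\deg h\hri{l}{i} \leq d-1$, which propagates $\deg h < ld$ through the residue splitting and is what ensures the bound $r \geq d$ in the statement suffices to verify the hypothesis of the main theorem.
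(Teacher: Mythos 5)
Your proof is correct and follows essentially the same route as the paper's: you derive the identity $(1-t)^d\sum_{n\geq 0} f(ln+i)t^n = h\hri{l}{i}$ via the residue-class split, note that $h\hri{l}{i}$ inherits nonnegative coefficients (and you sharpen the paper's degree bound $\deg h\hri{l}{i}\leq d$ to the tighter $\leq d-1$, since $\deg h < ld$), and then invoke Theorem~\ref{thm:maintheorem} with shift $0$. Your explicit reading of $f_i(rn)$ as $f(lrn+i)$ is the right interpretation and quietly resolves a small notational mismatch between the proposition's statement and the paper's definition of the constituent polynomials.
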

\begin{proof}
As in the proof of Lemma \ref{lem:hecke}
\begin{align*}
(1-t)^d\sum_{n\geq 0}f_i(n)t^n&=(1-t)^d\sum_{n\geq 0}f(ln+i)t^n\\
&=\left((1-t^l)^d\sum_{n\geq 0}f(n)t^n\right)\hri{l}{i}\\
        &=h\hri{l}{i},
\end{align*}
which has nonnegative coefficients and $\deg h\hri{l}{i}\leq d$, or $h\hri{l}{i}=0$. Thus, the claim follows from Theorem~\ref{thm:maintheorem}.\end{proof}
The assumptions of Corollary \ref{cor:quasi} are, for example, satisfied for Ehrhart quasi-polynomials which count the number of lattice points in integer dilates of rational polytopes (i.e.\ polytopes with rational vertex coordinates) (see, e.g., \cite[Exercise~3.30]{BR}):
\begin{cor}\label{cor:Ehrhartquasi}
Let $P\subseteq \R^d$ be a $d$-dimensional rational polytope, let $l$ be a natural number such that $lP$ is a lattice polytope, and let $0\leq i<l$ be an integer. Then
\[
\sum _{n\geq 0}E((rln+i)P)t^n \ = \ \frac{h^{(r)}(t)}{(1-t)^{d+1}},
\]
where $h^{(r)}\in \R[t]$ has only nonpositive real roots for $r\geq d$, or $h^{(r)}\equiv 0$ for $r\geq 1$.
\end{cor}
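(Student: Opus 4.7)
The plan is to apply Theorem \ref{thm:maintheorem} to the numerator polynomial associated with the $i$-th constituent of the Ehrhart quasi-polynomial of $P$, after one round of index-shifting.

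Writing $E_P(n)=E(nP)$, this is a quasi-polynomial of degree $d$ with quasi-period $l$, and by the rational-polytope analog of Stanley's nonnegativity theorem (the cited exercise in \cite{BR}) its Ehrhart series can be written
\[
\sum_{n\geq 0} E(nP)\,t^n \ = \ \frac{h(t)}{(1-t^l)^{d+1}}
\]
with $h\in\R[t]$ having nonnegative coefficients. Repeating the short $\hri{l}{i}$ computation from the proof of Proposition \ref{cor:quasi} shows that the $i$-th constituent polynomial $f_i$ (the polynomial agreeing with $E_P$ on the residue class $i \bmod l$) satisfies
\[
\sum_{n\geq 0} f_i(n)\,t^n \ = \ \frac{h\hri{l}{i}(t)}{(1-t)^{d+1}},
\]
where $h\hri{l}{i}$ is either zero or a polynomial of degree $s\leq d$ with nonnegative coefficients.

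The key observation is that $rln+i\equiv i\pmod{l}$ for every $n$, so $E((rln+i)P)=f_i(rln+i)$, and hence the desired series is obtained by extracting the arithmetic subseries of $\sum_n f_i(n)\,t^n$ at step $rl$ and offset $i$. By the very definition of the operator $U^{d+1}_{rl,i}$, this identifies $h^{(r)}(t) = U^{d+1}_{rl,i}\bigl(h\hri{l}{i}\bigr)(t)$. To conclude, I would apply Theorem \ref{thm:maintheorem} to $h\hri{l}{i}$ with parameters $r\to rl$, $i\to i$, $d\to d+1$: the hypothesis $r\geq d$ yields $rl\geq d\geq s\geq s-i$, so the theorem delivers that $h^{(r)}$ has only nonpositive real roots. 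The degenerate branch $h\hri{l}{i}\equiv 0$ yields $h^{(r)}\equiv 0$ for all $r\geq 1$.

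The only non-routine step is the index bookkeeping: recognizing that the period-$l$ quasi-polynomial structure and the subsequent subsampling at step $rl$ compose into a single application of $U^{d+1}_{rl,i}$ to the already-extracted constituent numerator $h\hri{l}{i}$, which is precisely what matches the hypothesis window $r\geq s-i$ of Theorem \ref{thm:maintheorem}.
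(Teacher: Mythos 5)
Your approach mirrors the paper's in spirit — reduce to Theorem~\ref{thm:maintheorem} via Proposition~\ref{cor:quasi} and the $\hri{l}{i}$ extraction — but the index bookkeeping you flag as the only non-routine step is in fact where the argument breaks.

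The problem is the identity
\[
\sum_{n\geq 0} f_i(n)\,t^n \ = \ \frac{h\hri{l}{i}(t)}{(1-t)^{d+1}},
\]
which is false when $f_i$ is, as you say, the polynomial agreeing with $E_P$ on the residue class $i\bmod l$. What the $\hri{l}{i}$ computation actually gives is
\[
\sum_{n\geq 0} f(ln+i)\,t^n \ = \ \frac{h\hri{l}{i}(t)}{(1-t)^{d+1}},
\]
that is, $h\hri{l}{i}$ is the numerator of the generating series of the \emph{resampled} sequence $n\mapsto E((ln+i)P)$, not of the constituent polynomial $f_i$ evaluated at all of $\N$. (These differ already for $d=1$: take $P=[0,\tfrac12]$, $l=2$; one finds $h=1+t$, so $h\hri{2}{0}=1$, but the numerator of $\sum_n f_0(n)t^n=\sum_n(\tfrac n2+1)t^n$ is $1-\tfrac12 t$.) Consequently, what you must subsample from is the sequence indexed by $m\mapsto E((lm+i)P)$, at step $r$ and offset $0$, which yields $h^{(r)}=U^{d+1}_{r,0}\bigl(h\hri{l}{i}\bigr)$ — exactly Proposition~\ref{cor:quasi}. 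Your formula $h^{(r)}=U^{d+1}_{rl,i}\bigl(h\hri{l}{i}\bigr)$ instead corresponds to the sequence $n\mapsto E\bigl((rl^2n+(l+1)i)P\bigr)$, a different subsequence; a direct check with $P=[0,\tfrac12]$, $l=2$, $r=1$, $i=0$ gives $U^2_{1,0}(1)=1$ versus $U^2_{2,0}(1)=1+t$, so the two operators do not agree. Thus the polynomial whose real-rootedness you establish via Theorem~\ref{thm:maintheorem} is not $h^{(r)}$.

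The repair is straightforward: apply Theorem~\ref{thm:maintheorem} to $h\hri{l}{i}$ with $i\to 0$, $r\to r$, $d\to d+1$. Since $\deg h < l(d+1)$ forces $s=\deg h\hri{l}{i}\leq d$, the hypothesis $r\geq d\geq s=s-0$ is exactly what is needed, and no factor of $l$ in the dilation parameter is required. This is what the paper does by invoking Proposition~\ref{cor:quasi}.
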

\section{Optimality of bounds}\label{sec:optimality}

Let $h \in\R_{\geq 0}[t]$ be a polynomial with nonnegative coefficients. Theorem \ref{thm:maintheorem} shows that $U_{r,i}^d h$ has only real roots whenever $r\geq \deg h -i$, and all these roots are distinct if $r> \deg h$. It remains the question if these bounds on $r$ are optimal. More precisely, for all $s,i$ and $d$ we would like to determine the integers
\[
R(s,d,i):=\min _{l\in \N} \{U_{m,i}^d h \text{ real-rooted for all } h\in \R_{\geq 0}[t],\deg h =s , m\geq l\}
\]
and
\[
R^\circ (s,d,i):=\min _{l\in \N} \{U_{m,i}^d h \text{ has distinct real roots for all } h\in \R_{\geq 0}[t],\deg h =s, m \geq l\}.
\]

By construction it is clear that $R(s,d,i)$ and $R^\circ (s,d,i)$ are greater or equal to $i+1$. The following results show that the bounds on $r$ given in Theorem \ref{thm:maintheorem} are in most cases indeed optimal.

\begin{prop}
Let $i,s \geq 0$. Then $R^\circ (s,1,i) = \max \{s-i,i+1\}$, $R^\circ (s,2,s-1) = s$ and in all other cases
\[
 R^\circ (s,d,i) \ = \ \max \{s+1,i+1\}. 
\]
\end{prop}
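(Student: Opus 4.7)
The argument is a case analysis matching the trichotomy in the statement, with an upper and a lower bound on $R^\circ(s,d,i)$ in each case. For the \emph{upper bounds} in Cases~1 and~2, my approach is to bound the degree of $U_{r,i}^d h$ directly: by Lemma~\ref{lem:hecke}, the product $h(t)(1+t+\cdots+t^{r-1})^d$ has degree $s+d(r-1)$, so $(\cdot)\hri{r}{i}$ produces a polynomial of degree at most $\lfloor(s+d(r-1)-i)/r\rfloor$. For $d=1,\,r\geq s-i$ this bound equals $1$, and for $d=2,\,r=s,\,i=s-1$ it also equals $1$. A polynomial of degree at most one with nonnegative coefficients is vacuously distinctly real-rooted, which, together with the trivial requirement $r\geq i+1$, yields $R^\circ(s,1,i)\leq\max\{s-i,i+1\}$ and $R^\circ(s,2,s-1)\leq s$. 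In Case~3 the upper bound is immediate from Theorem~\ref{thm:maintheorem}(i), since $r\geq s+1>s$ already forces distinct roots.

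The trivial lower bound $R^\circ(s,d,i)\geq i+1$ follows from the constraint $0\leq i<r$. In Case~1 with $s\geq 2i+2$, I would take $h=t^s$: a direct calculation from Lemma~\ref{lem:hecke} gives $U_{r,i}^1(t^s)=t^{m^\ast}$, where $m^\ast$ is the unique integer with $rm^\ast+i\in[s,s+r-1]$. The hypothesis $r<s-i$ then forces $m^\ast\geq 2$, producing a repeated root at $0$ and hence $R^\circ(s,1,i)\geq s-i$.

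The heart of the proof is the lower bound in Case~3, where I would use the one-parameter family $h=c+t^s$ at $r=s$. By Lemma~\ref{lem:lincomb}, only $h_0=c$ and $h_s=1$ contribute, and $h_s$ is paired with $a_d\hri{s}{s-(s-i)}=a_d\hri{s}{i}$, giving the clean factorization
\[
U_{s,i}^d(c+t^s) \ = \ (c+t)\,a_d\hri{s}{i}(t).
\]
A double root at $-c$ then arises as soon as $a_d\hri{s}{i}$ itself has a (negative) real root, which happens exactly when $\deg a_d\hri{s}{i}=\lfloor(d(s-1)-i)/s\rfloor\geq 1$, i.e.\ when $d(s-1)\geq s+i$. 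A short check of this inequality shows it holds in exactly the situations not covered by Cases~1 and~2. Taking $-c$ to be any such root of $a_d\hri{s}{i}$ then gives $R^\circ(s,d,i)\geq s+1$.

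The main obstacle is noticing that this ``degree $0$ versus degree $\geq 1$'' dichotomy for $a_d\hri{s}{i}$ matches exactly the trichotomy of the proposition: $a_d\hri{s}{i}$ is a nonzero constant precisely when $(d,i)$ falls into Case~1 or Case~2, and is of positive degree otherwise. This is what both forces the special case $(d,i)=(2,s-1)$ and makes the $(c+t^s)$-counterexample available in all the remaining cases.
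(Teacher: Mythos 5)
Your proof follows essentially the same route as the paper's, and the underlying decomposition is identical: the trivial lower bound $R^\circ\geq i+1$, the upper bound $R^\circ\leq s+1$ from Theorem~\ref{thm:maintheorem}(i), and in the exceptional cases an observation that the relevant polynomial degenerates to something of degree at most one, matched by explicit counterexamples. Where you bound $\deg U^d_{r,i}h$ directly by $\lfloor(s+d(r-1)-i)/r\rfloor$, the paper instead observes that $a\hri{r}{i}_1\equiv 1$ and $a\hri{s}{s-1}_2$ is a nonzero constant and invokes Lemma~\ref{lem:lincomb}; these amount to the same fact, and your phrasing via Lemma~\ref{lem:hecke} is a clean alternative. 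Your $d=1$ lower bound takes $h=t^s$ and exhibits a repeated root of $t^{m^\ast}$ at $0$; the paper takes $h_0+h_st^s$ and shows $U^1_{s-i-1,i}h=h_0+h_st^2$ has no real roots---a marginally stronger failure, but the extra strength is not needed for $R^\circ$. Your Case~3 lower bound is the paper's up to normalization: the factorization $U^d_{s,i}(c+t^s)=(c+t)\,a\hri{s}{i}_d$ with $-c$ a root of $a\hri{s}{i}_d$ is exactly the paper's $h(t)=1-t^s/\alpha_i$ giving $(1-t/\alpha_i)\,a\hri{s}{i}_d$.

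One caveat, which you share with the paper's own proof (and indeed with the statement as written): the ``short check'' that $d(s-1)\geq s+i$ holds in exactly the third-case situations, like the paper's parallel assertion that ``a root exists if $d\geq 3$,'' breaks down at $s=1$. There $a\hri{1}{0}_d\equiv 1$ is constant for every $d$, so the $(c+t^s)$-family has no repeated root; and in fact $U^d_{1,0}h=h$ is linear whenever $\deg h=1$, and Theorem~\ref{thm:maintheorem}(ii) already grants distinct roots at $r=s=1$, so $R^\circ(1,d,0)=1$ for every $d\geq 1$, not $\max\{2,1\}=2$ as the third case of the proposition would predict. This is not a defect introduced by you---the proposition needs a tacit exclusion of $s=1$, $d\geq 3$ in its third case---but since your argument explicitly leans on the equivalence ``$a\hri{s}{i}_d$ nonconstant $\Leftrightarrow$ Case~3,'' it is worth flagging that this equivalence, and hence the ``short check,'' is false as stated when $s\leq 1$.
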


\begin{proof}
Let $h=h_0+\cdots +h_s$ be a polynomial of degree $s$. By Theorem \ref{thm:maintheorem} $U_{r,i}^dh$ has distinct real roots if $r>s$. In order to show optimality, assume that $r=s>i$ and let $\alpha _i$ be a (negative) root of $a\hri{s}{i} _d$. It is not hard to see that a root exists if $d\geq 3$, and if $d=2$ and $i\leq s-2$. Let $h(t)=1-\tfrac{1}{\alpha _i}t^s$. Then by Lemma \ref{lem:linearexpr}

\[
U_{s,i}^d h \ = \ (1-\tfrac{1}{\alpha _i}t)a\hri{s}{i}
\]
which has a double root at $t=\alpha _i$. 

For $i=s-1$ and $d=2$ observe that $a\hri{s}{s-1}_2\equiv c$ is constant. Thus, if $h_j=0$ for all $0<j<s$, then $U_{s,s-1}^2 h = c(h_0+th_s)$. In the other case, if there is an $0<j<s$ with $h_j>0$ then $U_{s,s-1}^2 h$ has distinct zeros by Theorem \ref{thm:maintheorem}. Therefore $R^\circ (s,2,s-1)$ attains its minimal possible value $s$.

For $d=1$ observe that $a\hri{r}{i}_1\equiv 1$ for all $0\leq i<r$. Thus, by Lemma \ref{lem:linearexpr} $U_{r,i}^1 h$ is linear and therefore real-rooted whenever $r\geq s-i$.  Assume that $r=s-i-1$ and let $h(t)=h_0+t^sh_s$ with $h_0,h_s >0$. Then by Lemma \ref{lem:linearexpr}
\[
U_{s-i-1,i}^1 h \ = \ h_0+h_st^2
\]
which is not real-rooted.
\end{proof}

\begin{thm}\label{prop:optimality2}
Let $i,s \geq 0$ and $d\geq 1$. Then 
\[
 R (s,d,i) \ = \ \max \{s-i,i+1 \}.
\]
\end{thm}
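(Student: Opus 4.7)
The plan is to split into the upper bound and the lower bound. For the upper bound $R(s,d,i) \le \max\{s-i, i+1\}$: Theorem \ref{thm:maintheorem} shows $U_{m,i}^d h$ is real-rooted whenever $m \ge s-i$, and the operator $U_{m,i}^d$ is only defined for $m \ge i+1$, so together $R(s,d,i) \le \max\{s-i,i+1\}$. The trivial lower bound $R(s,d,i) \ge i+1$ already handles the case $s-i \le i+1$, so the work is to show, assuming $s-i-1 \ge i+1$, that there exists $h \in \R_{\geq 0}[t]$ of degree $s$ for which $U_{r,i}^d h$ fails to be real-rooted at the single value $r := s-i-1$.

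I would use the two-term polynomial $h(t) = 1 + Mt^s$ for a large parameter $M > 0$. By Lemma \ref{lem:hecke},
\[
U_{r,i}^d h \;=\; a_d^{\langle r,i\rangle}(t) \;+\; M\bigl(t^{s}(1+t+\cdots+t^{r-1})^d\bigr)^{\langle r,i\rangle}.
\]
Since $s = r + (i+1)$, a direct calculation from the definition of the residue operator gives the shift identity $(t^s f)^{\langle r,i\rangle} = t^2\, f^{\langle r, r-1\rangle}$ for every power series $f$: one simply observes that the coefficient of $t^{rn+i}$ in $t^s f$ is the coefficient of $t^{r(n-2)+(r-1)}$ in $f$. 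Applying this to $f=(1+\cdots+t^{r-1})^d$ yields the clean expression
\[
U_{r,i}^d h \;=\; a_d^{\langle r,i\rangle}(t) \;+\; M t^2\, a_d^{\langle r,r-1\rangle}(t).
\]

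Writing $f = a_d^{\langle r,i\rangle}$ and $g = a_d^{\langle r,r-1\rangle}$, the first three coefficients of $U_{r,i}^d h$ are $c_0 = f_0$, $c_1 = f_1$, and $c_2 = f_2 + M g_0$. By Proposition \ref{prop:keyprop}, both $f$ and $g$ have only negative real roots and positive leading coefficients; moreover $f_0 = [t^i](1+\cdots+t^{r-1})^d > 0$ and $g_0 = [t^{r-1}](1+\cdots+t^{r-1})^d > 0$ by the obvious decompositions $i = i + 0 + \cdots + 0$ and $r-1 = (r-1)+0+\cdots+0$. Since any real-rooted polynomial with nonnegative coefficients must satisfy log-concavity $c_1^2 \ge c_0 c_2$, and since $c_1^2 = f_1^2$ is fixed while $c_0 c_2 = f_0(f_2 + Mg_0) \to \infty$ as $M \to \infty$, for $M$ sufficiently large this inequality is violated and $U_{r,i}^d h$ cannot be real-rooted. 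Combined with the upper bound, this gives $R(s,d,i) = \max\{s-i,i+1\}$.

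The only delicate point is the shift identity $(t^s f)^{\langle r,i\rangle} = t^2 f^{\langle r,r-1\rangle}$, where the precise relation $s = r + i + 1$ is used essentially; everything else reduces to the standard fact that log-concavity is necessary for real-rootedness of polynomials with nonnegative coefficients. A minor care point is ensuring $f_0, g_0 > 0$ in all parameter ranges (including the boundary $r = i+1$, where $g = f$ and the argument still goes through since $U_{r,i}^d h = f(1+Mt^2)$ manifestly has non-real roots).
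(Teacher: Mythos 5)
Your proof is correct, and it uses exactly the same counterexample family as the paper: $h(t)=h_0+h_s t^s$ with $h_s$ very large relative to $h_0$ (your $h=1+Mt^s$), evaluated at $r=s-i-1$. You also obtain the same reduction $U_{s-i-1,i}^d h = h_0\,a_d^{\langle r,i\rangle} + h_s\,t^2 a_d^{\langle r,r-1\rangle}$ --- and in fact you get the index right where the paper has a small typo, writing $a_d^{\langle s-i-1,i+1\rangle}$ where it should read $a_d^{\langle s-i-1,s-i-2\rangle}$ per Lemma~\ref{lem:linearexpr}; this does not affect the paper's argument, since either index strictly exceeds $i$. Where you genuinely part ways is the final step. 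The paper argues by contradiction via a compactness/limit argument: if $U_{s-i-1,i}^dh$ were real-rooted for arbitrarily small $h_0$, the roots would have to accumulate at $0$ as $h_0\to 0$, contradicting the positivity of $U_{s-i-1,i}^dh$ on $(\alpha,\infty)$, where $\alpha<0$ is the largest root of $a_d^{\langle r,i\rangle}$ (this relies on the interlacing $a_d^{\langle r,r-1\rangle}\prec a_d^{\langle r,i\rangle}$ from Proposition~\ref{prop:keyprop}). You instead invoke Newton's inequalities: real-rootedness forces $c_1^2\ge c_0 c_2$, but $c_0=f_0>0$, $c_1=f_1$ is fixed, and $c_2=f_2+Mg_0\to\infty$ with $g_0>0$, so the inequality fails once $M$ is large. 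Your route is more elementary and quantitative (it produces an explicit violation of a necessary log-concavity condition and does not need the interlacing input or a compactness argument), while the paper's approach keeps the entire section in the language of interlacing that it has already developed; both are clean and both are correct.
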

\begin{proof}
Let $h$ be a polynomial of degree $s$. By Theorem \ref{thm:maintheorem} $U_{r_i}^dh$ has only real roots if $r\geq s-i$. To show optimality, we may assume that $s-i > i+1$. We consider polynomials of the form  $h(t)=h_0+h_st^s$ and $h_s \gg h_0 >0$. Then by Lemma \ref{lem:linearexpr}
\[
U_{s-i-1,i}^d h \ = \ h_0a_d\hri{s-i-1}{i}+h_st^2a_d\hri{s-i-1}{i+1}
\]
if $s-i-1>i+1$, and $U_{s-i-1,i}^d h \ = \ (h_0+h_st^2)a_d\hri{s-i-1}{i}$ if $s-i-1=i+1$. In this case it is clear that $U_{s-i-1,i}^d h$ has not only real roots.

We also want to see that in the case $s-i-1>i+1$. Let $\alpha<0$ be the largest (possibly non-existing) root of $a_d\hri{s-i-1}{i}$. Since $a_d\hri{s-i-1}{i+1} \prec a_d\hri{s-i-1}{i}$ and $h_0, h_s>0$ we have $U_{s-i-1,i}^d h(t)>0$ for all $t>\alpha$. Assume there exists a sequence $\{h_0^{(n)}\}_{n\in \N}$ of positive real numbers with $\lim _{n\rightarrow \infty}h_0^{(n)}=0$ such that $f^{(n)}:=h_0^{(n)}a_d\hri{s-i-1}{i}+h_st^2a_d\hri{s-i-1}{i+1}$ has only real roots. Then $\lim _{n\rightarrow \infty}f^{(n)}=h_st^2a_d\hri{s-i-1}{i+1}$, and for all $n\geq 0$ there are roots $\beta _n$ of $f^{(n)}$ with $\lim _{n\rightarrow \infty} \beta _n =0$. But this contradicts $f^{(n)}(t)>0$ for all $t>\alpha$. Therefore, for $0<h_0<<h_s$, that is, if $h_0$ is very small compared to $h_s$, $U_{s-i-1,i}^d h$ does not have only real roots.
\end{proof}

\section{Open questions}\label{sec:openquest}

The main question that remains is about the optimality of the bounds if we restrict to specific classes of formal power series, for example to Ehrhart series or Hilbert series of graded algebras. Since in these cases the coefficients of the numerator polynomial $h$ are natural numbers, finding the optimal bounds has a number theoretic flavor. For example, it is easy to see that $U_{s,0}^d h$ for binomial $h=h_0+h_st^s$ can only have a double root if $a\hri{s}{0}_d$ has a rational root.

For Ehrhart series, a result of Batyrev and Hofscheier \cite{batyrev2010generalization} comes in very useful. They characterized all lattice polytopes with $h^\ast$-polynomial of the form $1+h_st^s$. In particular, for $s\leq \frac{(d+1)}{2}$ there are lattice polytopes with  $h^\ast$-polynomial $1+h_st^s$ with arbitrarily large $h_s$. With the same arguments as in the proof of Proposition~\ref{prop:optimality2} this yields
\begin{cor}
Let $d\geq 1$, $i\geq 0$ and $s\leq \frac{(d+1)}{2}$ be integers. Then $U_{r,i}^dh^\ast (P)$ is real rooted for all $d$-dimensional lattice polytopes $P$ with $\deg h^\ast(P)=s$ whenever $r\geq \max \{s-i,i+1\}$, and the bound $\max \{s-i,i+1\}$ is optimal.
\end{cor}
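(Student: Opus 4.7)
The real-rootedness part for $r\geq\max\{s-i,i+1\}$ is immediate: by Stanley's theorem $h^\ast(P)$ has nonnegative coefficients, so Theorem~\ref{thm:maintheorem} delivers real-rootedness of $U^d_{r,i}h^\ast(P)$ as soon as $r\geq s-i$, while $r\geq i+1$ is forced by the defining requirement $i<r$ of the operator $U^d_{r,i}$.

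For optimality I separate two cases. If $s\leq 2i+1$ then $\max\{s-i,i+1\}=i+1$, which is trivially sharp since $U^d_{r,i}$ is undefined for $r\leq i$. Otherwise $s\geq 2i+2$, I set $r:=s-i-1\geq i+1$, and the task is to exhibit a $d$-dimensional lattice polytope $P$ with $\deg h^\ast(P)=s$ for which $U^d_{r,i}h^\ast(P)$ has a non-real root. This is precisely where the hypothesis $s\leq(d+1)/2$ is used, via the Batyrev--Hofscheier classification~\cite{batyrev2010generalization}: it supplies $d$-dimensional lattice polytopes whose $h^\ast$-polynomial has the binomial form $1+h_s t^s$ with $h_s$ an arbitrarily large positive integer.

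It then remains to verify that for $h_s$ large enough, $U^d_{r,i}(1+h_s t^s)$ fails to be real-rooted. A direct computation using Lemma~\ref{lem:linearexpr} yields
\[
U^d_{s-i-1,i}(1+h_s t^s)=\begin{cases}(1+h_s t^2)\,a\hri{i+1}{i}_d&\text{if }s=2i+2,\\[2pt] a\hri{s-i-1}{i}_d+h_s t^2\,a\hri{s-i-1}{s-i-2}_d&\text{if }s>2i+2.\end{cases}
\]
In the first alternative the factor $1+h_s t^2$ already has non-real roots. In the second, I recycle the limiting argument from the proof of Proposition~\ref{prop:optimality2}: by Proposition~\ref{prop:keyprop}, $a\hri{r}{r-1}_d\prec a\hri{r}{i}_d$, so if $\alpha<0$ denotes the largest root of $a\hri{r}{i}_d$, both polynomials are strictly positive on $(\alpha,\infty)$; dividing by $h_s$ and letting $h_s\to\infty$ shows that any real roots of $U^d_{r,i}(1+h_s t^s)$ would have to cluster near the double root at $0$ of the limit polynomial $t^2 a\hri{r}{r-1}_d$, contradicting that they must lie in $(-\infty,\alpha]$. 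The only genuinely new ingredient beyond Proposition~\ref{prop:optimality2} is the appeal to Batyrev--Hofscheier to realize the extremal numerators by actual lattice polytopes; the rest is a transcription of the earlier argument.
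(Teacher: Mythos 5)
Your proof is correct and is essentially the approach the paper itself takes (the paper simply cites ``the same arguments as in the proof of Proposition~\ref{prop:optimality2}'' after invoking Batyrev--Hofscheier). You have spelled out the transcription cleanly, including the small case split $s=2i+2$ versus $s>2i+2$ and the normalization $h_0=1$, $h_s\to\infty$ that the Ehrhart setting forces; incidentally, your index $a\hri{s-i-1}{s-i-2}_d$ in the computation via Lemma~\ref{lem:linearexpr} corrects what appears to be a slip ($a\hri{s-i-1}{i+1}_d$) in the paper's statement of Proposition~\ref{prop:optimality2}, though the ensuing interlacing argument is unaffected either way.
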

It remains the question for the optimal bounds in the case  $d\geq s>\frac{(d+1)}{2}$.
\begin{quest}
For all $s$ and $d$ find the smallest number $N=N(s,d)$ such that for all lattice polytopes of dimension $d$ with $\deg h^\ast (P)=s$ the $h^\ast$-polynomial of $rP$ has only real roots for all $r\geq N$.
\end{quest}

\textbf{Acknowledgements.} We would like to thank Matthias Beck, Raman 
Sanyal for valuable feedback, and Christian Haase and Alan Stapledon for fruitful discussions.

\bibliographystyle{siam}
\bibliography{Veronese}

\end{document}